\newtheorem{theorem}{Theorem}[section]
\newtheorem{lemma}[theorem]{Lemma}
\theoremstyle{definition}
\newtheorem{definition}[theorem]{Definition}
\theoremstyle{remark}
\numberwithin{equation}{section}
\newcommand{\R}{\mathbb{R}}
\newcommand{\C}{\mathbb{C}}
\newcommand{\N}{\mathbb{N}}
\newcommand{\Z}{\mathbb{Z}}
\newcommand{\D}{\mathbb{D}}
\newcommand{\dz}{\;\mathrm{d}z}
\newcommand{\daz}{\;|\mathrm{d}z|}
\newcommand{\domega}{\;\mathrm{d}\omega}
\newcommand{\pd}{\partial}
\renewcommand{\H}{\mathbb{H}}
\let\temp\phi
\let\phi\varphi
\let\varphi\temp
\newcommand{\az}{\alpha}
\newcommand{\bz}{\beta}
\newcommand{\vz}{\phi}
\newcommand{\Oz}{\Omega}
\newcommand{\tz}{\theta}
\newcommand{\gz}{\gamma}
\newcommand{\wt}{\widetilde}
\newcommand{\bN}{\mathbb N}
\newcommand{\fz}{\infty}
\newcommand{\od}{\mathrm{d}}
\title[Homeomorphic Sobolev extensions]{Homeomorphic Sobolev extensions of parametrizations of Jordan curves}
\author[O. Bouchala]{Ondr\v{e}j Bouchala}
\address{O. Bouchala, Czech Technical University in Prague, Faculty of Information Technology, Th\'akurova 9, 160 00 Prague 6, Czech Republic}
\email{ondrej.bouchala@gmail.com}
\author[J. J\"a\"askel\"ainen]{Jarmo J\"a\"askel\"ainen}
\address{J. J\"a\"askel\"ainen, Department of Economics and Management, Koetilantie 5, P.O. Box 28, 00014 University of Helsinki, Finland}
\email{jarmo.jaaskelainen@helsinki.fi}
\author[P. Koskela]{Pekka Koskela}
\address{P. Koskela, Department of Mathematics and Statistics, University of Jyväskylä, P.O. Box 35 (MaD), FI-40014, Finland}
\email{pekka.j.koskela@jyu.fi}
\author[H. Xu]{Haiqing Xu}
\address{H. Xu, Frontiers Science Center for Nonlinear Expectations (Ministry of Education of China), Research Center for Mathematics and Interdisciplinary Sciences, Shandong University, Qingdao 266237, China}
\email{haiqing.xu@email.sdu.edu.cn}
\author[X. Zhou]{Xilin Zhou}
\address{X. Zhou, Department of Mathematics and Statistics, University of Jyväskylä, P.O. Box 35 (MaD), FI-40014, Finland}
\email{xilin.j.zhou@jyu.fi}
\subjclass[2020]{Primary: 46E35 secondary: 30C62, 58E20.}
\keywords{Sobolev homeomorphism, Sobolev extension, hyperbolic metric}
\thanks{This research was partially supported by the
Finnish centre of excellence in Randomness and Structures of the Academy of Finland, funding decision number: 346305. H. Xu was partially supported by the Young Scientist Program of the Ministry of Science and Technology of China (No.~2021YFA1002200), the National Natural Science Foundation of China (No.~12101362 and No.~12201349), and the Natural Science Foundation of Shandong Province (No.~ZR2022YQ01 and No.~ZR2021QA032)}
\begin{document}

\begin{abstract}
    Each homeomorphic parametrization of a Jordan curve via the unit circle extends to a homeomorphism of the entire plane. It is a natural question to ask if such a homeomorphism can be chosen so as to have some Sobolev regularity.

This prompts the simplified question: for a homeomorphic embedding of the unit circle into the plane, when can we find a homeomorphism from the unit disk that has the same boundary values and integrable first-order distributional derivatives?

We give the optimal geometric criterion for the  interior Jordan domain  
so that there exists a Sobolev homeomorphic extension for any homeomorphic parametrization of the Jordan curve.

The  problem is partially motivated by trying to understand which boundary values can correspond to deformations of finite energy.
\end{abstract}
\maketitle
\section{Introduction}

If $\phi \colon \pd \D \to \C$ is a (homeomorphic) embedding,  then the Jordan curve $\phi(\pd \D)$ bounds an interior Jordan domain $\Omega$, and we know that $\phi$ extends to a homeomorphism from $\overline{\D}$ to $\overline{\Omega}$, by the Jordan-Schoenflies theorem. 
The extension given by this theorem does not need to have any regularity even when both $\phi$ and $\partial \Omega$ are regular. 

Throughout this note we will denote by $\Omega \subset \C$  the bounded interior Jordan domain corresponding to the Jordan curve $\phi(\pd \D)$. We  study the Sobolev regularity of extensions of parametrizations of $\phi(\pd \D)$ and we give a geometric criterion which guarantees that an arbitrary boundary
homeomorphism $\phi \colon \pd \D \to \pd \Omega$ admits a homeomorphic extension from $\overline{\D}$ to $\overline{\Omega}$ in the Sobolev class $W^{1,p}(\D, \C)$ (Theorem \ref{The1}). Further, our criterion is optimal (Theorem \ref{counterex}). 

To begin with a classical extension, recall that,
if $\Omega$ is convex, then the complex-valued Poisson extension of the boundary homeomorphism $\phi$ is a diffeomorphism, by the classical Rad\'o-Kneser-Choquet theorem. Moreover, one knows precisely when a harmonic function with $p$-integrable partial derivatives, $1<p\le 2,$ can be found: one simply checks if the boundary value function
belongs to the trace space of the Sobolev space $W^{1,p}(\D, \C)$.

Note though, that even when the image Jordan curve is the unit circle, one can find a boundary homeomorphism $\phi \colon \pd \D \to \pd \D$ for which one cannot obtain a homeomorphic $W^{1,2}$-extension, as was shown by Verchota \cite{Verchota2007}. This means that we have to deal with $W^{1, p}$-extensions, where $1 \leq p < 2$.

 Actually, it has turned out that, for an arbitrary Jordan domain $\Omega$, one cannot necessarily even have an extension in the class $W^{1,1}(\D, \C)$, see Zhang \cite{Zhang2019}. 

 \begin{theorem}{\cite[Theorem 1.2]{Zhang2019}}
     There is a homeomorphic parametrization $\phi \colon \pd \D \to \pd \Omega$ of a Jordan curve that does not admit a homeomorphic $W^{1,1}$-extension.
 \end{theorem}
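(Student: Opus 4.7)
I would prove the statement by exhibiting an explicit pair $(\Omega, \phi)$ such that any homeomorphic extension $f \colon \overline{\D} \to \overline{\Omega}$ of $\phi$ is forced to violate the ACL length bound inherent in $W^{1,1}$.

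\textbf{Construction.} I would take $\Omega$ to be a bounded Jordan domain built from a central region with a countable family of attached thin outward ``fingers'' $\{P_n\}$---thin rectangles of widths $w_n$ and lengths $L_n$ satisfying $\sum_n L_n = +\infty$---whose bases are pairwise disjoint and accumulate at a single point $p_\infty \in \pd\Omega$. The parametrization $\phi \colon \pd\D \to \pd\Omega$ would be chosen so that the preimage of $\pd P_n \cap \pd\Omega$ is a short arc $I_n \subset \pd\D$, and the $I_n$ are placed around $\pd\D$ interleaved with ``separator arcs'' $S_n$ whose $\phi$-images lie far from $p_\infty$.

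\textbf{Estimate.} For any hypothetical homeomorphic extension $f$ in $W^{1,1}(\D,\C)$, the ACL property and Fubini yield
\begin{equation*}
\int_{-1}^{1} \mathrm{length}\bigl(f(\{y=c\}\cap\D)\bigr) \, dc \leq \int_{\D} |\pd_x f| \leq \|Df\|_{L^1(\D)} < \infty.
\end{equation*}
A topological crossing argument, exploiting that $f$ is a homeomorphism and that the preimages of the separator arcs $S_n$ must lie between the preimages of consecutive fingers, would give an interval $J \subset (-1,1)$ of positive length such that, for every $c \in J$, the horizontal slice $\{y=c\}\cap\D$ meets $f^{-1}(P_n)$ for all but finitely many $n$. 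Each such intersection contributes at least $L_n$ to the length of the image curve, and $\sum_n L_n = \infty$ contradicts the displayed bound.

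\textbf{Main obstacle.} The key difficulty is the spreading step: absent careful design, the preimages $f^{-1}(P_n)$ could huddle near $\phi^{-1}(p_\infty)$ in a region of tiny $y$-extent, and no positive-measure family of slices would hit them all. The role of the separator arcs is precisely to prevent this nesting---since their $\phi$-images must be recovered by $f$ in regions far from $p_\infty$, the preimages $f^{-1}(P_n)$ are compelled to extend inward across a common strip of $\D$. Rigorously bookkeeping this nesting, and choosing the rates $w_n, L_n$ and the ordering of the arcs so that $\Omega$ remains Jordan and $\phi$ a homeomorphism, is the technical heart of the argument.
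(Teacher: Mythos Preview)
The paper does not prove this statement itself --- it is quoted from \cite{Zhang2019}. What the paper does prove is the strictly stronger Theorem~\ref{counterex} (Section~3), and that construction is the relevant comparison.

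Your comb-with-one-accumulation-point plan has a genuine gap at precisely the place you label the ``main obstacle''. Because $\phi\colon\pd\D\to\pd\Omega$ is a homeomorphism, it preserves cyclic order; hence if the finger bases accumulate at $p_\infty\in\pd\Omega$, then both the arcs $I_n=\phi^{-1}(\pd P_n\cap\pd\Omega)$ \emph{and} the separator arcs $S_n$ between them must accumulate at the single point $\phi^{-1}(p_\infty)\in\pd\D$. The requirement that $\phi(S_n)$ lie far from $p_\infty$ does not move $S_n$: it is pinned between $I_n$ and $I_{n+1}$ on $\pd\D$ regardless of where its image sits in $\pd\Omega$. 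Consequently nothing prevents a homeomorphic extension $f$ from packing every $f^{-1}(P_n)$ into an arbitrarily small neighbourhood of $\phi^{-1}(p_\infty)$ (the Riemann map already does this when the fingers are thin), and then no horizontal strip of positive width meets infinitely many of them. There is a second issue as well: even when a slice does cross $f^{-1}(P_n)$, its image may enter and leave $P_n$ near the base without travelling to the tip, so the length contribution need not be comparable to $L_n$.

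The paper's construction sidesteps both problems by replacing the single accumulation point with a fat Cantor set $C\subset\pd\Omega$, each of whose points lies at the end of an infinite branch of a tree with unit-length edges. Then $C'=\phi^{-1}(C)\subset\pd\D$ has positive length, and for \emph{every} $z\in C'$ and \emph{every} curve $\gamma_z$ in $\overline{\D}$ from an interior point to $z$ one has $\ell(\Phi(\gamma_z))=\infty$. Taking $\gamma_z$ to be the radial segment and integrating $|D\Phi|$ in polar coordinates over the positive-measure set of angles $C'$ yields the $W^{1,1}$ contradiction immediately --- no spreading argument is needed because the bad directions already occupy positive measure on $\pd\D$. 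That is the missing idea: with a single accumulation point you must manufacture a positive-measure family of bad slices by a topological argument that the disk does not supply, whereas a positive-measure set of inaccessible boundary targets hands you that family for free.
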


In the above theorem $\phi$ can actually be chosen to be the boundary value of a an exterior conformal map. This calls for a geometric condition of the Jordan domain $\Omega$ that guarantees the existence of a homeomorphic $W^{1,p}$-extension. We give one in terms of the hyperbolic distance $h_{\Omega}$ of the Jordan domain $\Omega$.

\begin{theorem}\label{The1}
	Let $\Omega$ be a Jordan domain,  $z_0\in\Omega$ and $q\in(1,\infty)$. Let us assume that
\begin{equation}\label{Lq-integroituva}
\int_{\Omega}\left(h_{\Omega}(z,z_0)\right)^q\dz<\infty.
\end{equation}
Then each homeomorphic parametrization $\phi \colon \pd \D \to \pd \Omega$ of our Jordan curve has a homeomorphic extension  in the class $W^{1,p}(\D,\C)$ for all $p\in[1,2)$.
\end{theorem}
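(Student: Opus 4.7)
The strategy is to factor the problem through the unit disk via a Riemann map, then to build an extension of the induced circle homeomorphism that is adapted to the hyperbolic geometry of $\Omega$. Since $\Omega$ is a Jordan domain, a conformal map $f\colon \D\to\Omega$ extends to a homeomorphism $f\colon \overline{\D}\to\overline{\Omega}$ by Carath\'eodory's theorem. Set $\psi := f^{-1}\circ\phi\colon \partial\D\to\partial\D$; any homeomorphism $\Psi\colon \overline{\D}\to\overline{\D}$ extending $\psi$ produces a homeomorphic extension $F := f\circ\Psi$ of $\phi$, and the task reduces to choosing $\Psi$ so that $F\in W^{1,p}(\D,\C)$.

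The first technical input is to translate \eqref{Lq-integroituva} into weighted Sobolev control on $f$. By the conformal invariance of the hyperbolic metric, $h_\Omega(f(w),z_0)=h_\D(w,w_0)$ with $w_0:=f^{-1}(z_0)$, so a change of variables recasts \eqref{Lq-integroituva} as
\begin{equation*}
\int_\D h_\D(w,w_0)^q\,|f'(w)|^2\,\mathrm{d}w < \infty.
\end{equation*}
On the dyadic annuli $A_n := \{2^{-n-1}\le 1-|w|\le 2^{-n}\}$ one has $h_\D(\cdot,w_0)\asymp n$, so the integral of $|f'|^2$ over $A_n$ decays, on average, like $n^{-q}$. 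H\"older's inequality with exponents $2/p$ and $2/(2-p)$ applied on each $A_n$ then yields $\int_{A_n}|f'|^p\,\mathrm{d}w\lesssim a_n^{p/2}\cdot 2^{-n(1-p/2)}$, where $a_n:=\int_{A_n}|f'|^2$, and a further H\"older in the summation index $n$, exploiting $\sum n^q a_n<\infty$, gives $f\in W^{1,p}(\D,\C)$ for every $p\in[1,2)$.

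To construct $\Psi$, I would use a hyperbolic piecewise construction adapted to the boundary map: for each $n\ge 0$, mark the $2^n$-th roots of unity on $\partial\D$ together with their images under $\psi$; triangulate $\D$ using hyperbolic geodesics between these vertices at consecutive scales; and define $\Psi$ piecewise, mapping each triangle of scale $n$ onto the corresponding triangle in the image triangulation via an explicit (e.g.\ hyperbolic-affine) map. The resulting $\Psi$ is a homeomorphism of $\overline{\D}$, smooth on each triangle, with $|D\Psi|$ on a triangle $T$ of scale $n$ controlled in terms of the ratio $|\psi(I)|/|I|$ for the boundary arc $I$ of $T$.

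Finally, by the chain rule and the piecewise smoothness of $\Psi$,
\begin{equation*}
\int_\D |DF|^p\,\mathrm{d}z \le \int_\D |f'(\Psi(z))|^p\,|D\Psi(z)|^p\,\mathrm{d}z,
\end{equation*}
and on each triangle $T$ the integrand is dominated by a Euclidean factor determined by the hyperbolic size of the image $f(\Psi(T))\subset\Omega$. \emph{The main obstacle} is to establish that the resulting sum over all triangles is controlled by the hyperbolic integrability condition \eqref{Lq-integroituva}: although $\psi$ can be arbitrarily wild, so that $|D\Psi|$ is unbounded and no direct change-of-variables estimate will close, the composition with $f$ converts the relevant scale into the hyperbolic scale in $\Omega$, which is precisely what the hypothesis controls. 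The restriction $p<2$ is essential because the positive exponent $1-p/2$ in the annular estimate is what provides enough slack to absorb the worst-case Jacobian distortion of $\Psi$ along the dyadic tree.
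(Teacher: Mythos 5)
Your initial reduction is on target and matches the paper's setup: introduce the Riemann map $f\colon\D\to\Omega$ with its Carath\'eodory extension, pass to $\psi=f^{-1}\circ\phi$ on $\pd\D$, and look for an extension of the form $f\circ\Psi$. Your change-of-variables observation that \eqref{Lq-integroituva} is equivalent to $\int_\D h_\D(w,w_0)^q|f'(w)|^2\,\mathrm{d}w<\infty$ and hence that $f\in W^{1,p}(\D,\C)$ for all $p<2$ (via the annular H\"older computation) is correct. But that observation is not used in the paper's proof and by itself cannot be, because the object to control is $f\circ\Psi$, not $f$.

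The decisive gap is the one you flag yourself. If $\Psi$ is any explicit piecewise-hyperbolic map built on a triangulation by hyperbolic geodesics, then the factor $|D\Psi|$ on a triangle of scale $n$ is essentially $|\psi(I)|/|I|$, and since $\psi$ is an arbitrary circle homeomorphism this ratio is completely uncontrolled: the chain-rule estimate $\int_T|f'\circ\Psi|^p\,|D\Psi|^p$ cannot be closed by trading decay of $|f'|$ against growth of $|D\Psi|$ pointwise. The paper avoids this by not estimating the Jacobian of the extension at all. It invokes the Koski--Onninen extension theorem (Theorem~\ref{t0}, \cite[Theorem~3.1]{KO2023}), which asserts that a homeomorphic $W^{1,p}$-extension exists provided one can find a dyadic family of pairwise-disjoint crosscuts $\Gamma_{n,j}\subset\Omega$ joining the endpoints of $\phi(I_{n,j})$ with
\[
\sum_{n\ge n_0}2^{(p-2)n}\sum_{j=1}^{2^n}\ell(\Gamma_{n,j})^p<\infty.
\]
This converts the problem from Jacobian control to length control. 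The paper then takes $\Gamma_{n,j}=f(\gamma_{n,j})$, where $\gamma_{n,j}$ is the hyperbolic geodesic in $\D$ joining $f^{-1}(\phi(x_j))$ and $f^{-1}(\phi(x_{j+1}))$. The key estimate (Lemma~\ref{l2.1}) is $\ell(\Gamma_{n,j})^2\le c(q)\int_{\Delta_{n,j}}h_\Omega(\cdot,z_0)^q$, where $\Delta_{n,j}$ is the region cut off by $\Gamma_{n,j}$. Its proof decomposes the geodesic into countably many arcs indexed by hyperbolic distance $\asymp |m|$ from the center, uses the Koebe distortion theorem to compare $|f'|$ across each arc's Whitney shadow, and observes that $h_\Omega\gtrsim|m|$ on the image of that shadow, so each arc's length is dominated by $|m|^{-q/2}\bigl(\int h_\Omega^q\bigr)^{1/2}$ over the shadow; Cauchy--Schwarz and $\zeta(q)<\infty$ (here $q>1$ enters) finish. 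Since at a fixed level $n$ the regions $\Delta_{n,j}$ are disjoint, $\sum_j\ell(\Gamma_{n,j})^2\le c(q)\int_\Omega h_\Omega^q=:M$; then H\"older in $j$ plus $p<2$ gives a geometric sum in $n$. These are exactly the two ingredients missing from your sketch: the reduction to crosscut lengths (Theorem~\ref{t0}), and the length bound (Lemma~\ref{l2.1}) that makes precise your heuristic that ``the composition with $f$ converts the scale into the hyperbolic scale.''
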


Thus we have found a sufficient geometric condition on $\Omega$ that guarantees the optimal extendability.
Our criterion \eqref{Lq-integroituva} is an intrinsic geometric criterion, since the hyperbolic distance $h_{\Omega}$ is comparable, by the Koebe distortion theorem, to the quasihyperbolic distance that is given by a path metric with the density $d(z, \pd \Omega)^{-1}$.

Our result is actually optimal. Indeed, if $q = 1$, we do not necessarily even have a $W^{1,1}$-extension. This  is shown in the next theorem.

\begin{theorem}\label{counterex}
There is a homeomorphic parametrization  $\phi: \pd\D \to \pd\Omega$ of a Jordan curve so that $\phi$ does not have a homeomorphic $W^{1,1}$-extension, even though
    $$\int_{\Omega} h_{\Omega}(z,z_0)\,\dz <\infty,
    $$
for some $z_0\in\Omega$.
\end{theorem}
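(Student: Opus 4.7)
The plan is to construct explicitly a Jordan domain $\Omega$ and a boundary homeomorphism $\phi\colon\pd\D\to\pd\Omega$ using a ``rooms and corridors'' design. Take $\Omega$ to be a central convex core $Q_0$ (say the unit square) with a countable family of small square rooms $R_k$ of side $s_k$ attached via thin corridors $C_k$ of width $\epsilon_k\ll s_k$ and (very small) length $\ell_k$, with attachment points on $\pd Q_0$ converging to a single point $p_\infty$, all pairwise disjoint so that $\Omega$ is a Jordan domain. By the Koebe distortion theorem mentioned after Theorem~\ref{The1}, $h_\Omega$ is comparable to the quasi-hyperbolic distance $k_\Omega$. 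The contribution of each $R_k\cup C_k$ to $\int_\Omega k_\Omega(z,z_0)\dz$ is of order $s_k^2(1+\ell_k/\epsilon_k)$, so a choice such as $s_k=1/k$, $\epsilon_k=2^{-k}$, $\ell_k=\epsilon_k^2$ keeps the total sum finite while crucially preserving $\sum_k s_k=\infty$.

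The parametrization $\phi$ is then defined by fixing a sequence of pairwise disjoint arcs $I_k\subset\pd\D$ accumulating at a point $p\in\pd\D$ and mapping each $I_k$ homeomorphically onto the boundary portion of $R_k\cup C_k$ in $\pd\Omega$, with the remainder of $\pd\D$ sent homeomorphically onto $\pd Q_0$ (with $p\mapsto p_\infty$). Suppose for contradiction that $f\in W^{1,1}(\D,\C)$ is a homeomorphic extension of $\phi$, and set $U_k:=f^{-1}(R_k)$, a Jordan subdomain of $\D$ with $\pd U_k\cap\pd\D\subseteq I_k$. I would aim to prove a uniform energy lower bound
\[
\int_{U_k}|Df|\,dA\;\geq\; c\,s_k,
\]
with $c>0$ absolute. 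Summing over $k$ would then yield $\int_\D|Df|\,dA=\infty$, contradicting $f\in W^{1,1}$, since $\sum_k s_k=\infty$.

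The main obstacle is establishing this lower bound uniformly in $|I_k|$, which can be made arbitrarily small relative to $s_k$. A crude Fubini on radial slices of $U_k$ yields only $\int_{U_k}|Df|\gtrsim|I_k|\cdot s_k$, insufficient by itself, and an affine map from a small source square to $R_k$ shows that $L^1$-Sobolev bounds alone do not suffice without extra topological input. The key is to exploit that the interior part of $\pd U_k$ is mapped by $f$ onto the corridor opening, a segment of length only $\epsilon_k$: thus $f$ must ``unfold'' a narrow channel in $\D$ to fill the entire room $R_k$. A natural route is to apply the coarea formula to the second coordinate $f_2$, obtaining
\[
\int_{U_k}|Df|\,dA\;\geq\;\int_0^{s_k}\mathcal{H}^1\bigl(\{f_2=t\}\cap U_k\bigr)\,dt,
\]
and then to bound each level set length from below using the topology of $U_k$ together with the separation property induced by the preimage of the corridor opening (which forces each level arc to ``straddle'' this interior boundary). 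The delicate balance needed between (i) the hyperbolic integrability, (ii) the divergence of $\sum_k s_k$, and (iii) the geometric input ensuring the level-set bound is the central challenge of the construction.
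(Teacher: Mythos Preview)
Your proposal has a genuine gap at the crucial step: the energy lower bound $\int_{U_k}|Df|\,dA \geq c\,s_k$ cannot be established, and in fact fails. You correctly observe that an affine map from a small region onto $R_k$ already violates the bound, but the coarea/topological argument you sketch does not rescue it. The level sets $\{f_2=t\}\cap U_k$ are arcs joining two points of $\pd U_k$; if $U_k$ has diameter $\delta$ (which is entirely unconstrained by the boundary data), each such arc has length $O(\delta)$, and coarea yields only $\int_{U_k}|Df|\gtrsim \delta\,s_k$, no better than the radial Fubini estimate. That the interior portion of $\pd U_k$ maps to a segment of length $\epsilon_k$ imposes no lower bound on the length of that preimage curve in $\D$, nor on the level sets that meet it. Concretely: take $U_k$ to be a half-disk of radius $\delta$ sitting on $I_k$ and let $f|_{U_k}$ be a bi-Lipschitz map onto $R_k$; then $\int_{U_k}|Df|\approx \delta s_k\to 0$ as $\delta\to 0$, while all the topological features you invoke are present. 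There is also a secondary error in the hyperbolic-integral estimate: passing from a slit of width $\epsilon_k$ into a room of side $s_k$ costs an additional $\log(s_k/\epsilon_k)$ in quasihyperbolic distance, so the contribution of $R_k$ is of order $s_k^2\bigl(1+\ell_k/\epsilon_k+\log(s_k/\epsilon_k)\bigr)$; with $s_k=1/k$, $\epsilon_k=2^{-k}$ this is a divergent harmonic series. That issue is repairable by adjusting $\epsilon_k$, but the energy lower bound is not.

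The paper's construction is structurally different and avoids this trap. It builds a \emph{tree-like} domain whose uncountably many ends are indexed by a fat Cantor set $C\subset[0,1]$ of positive length; each branch at generation $k$ is a ``snake'' of Euclidean length $1$ but width decaying doubly exponentially (like $\exp(-\exp(k))$), so that $\int_\Omega h_\Omega<\infty$ while every curve in $\Omega$ from a fixed interior point to a point of $C$ has infinite length. The parametrization $\phi$ sends a set $C'\subset\pd\D$ of positive length onto $C$. Then for \emph{any} homeomorphic extension $\Phi$ and every $z\in C'$, the image of the radial segment to $z$ is non-rectifiable; integrating $|D\Phi|$ in polar coordinates over the sector of radii ending in $C'$ immediately gives $\int_\D|D\Phi|=\infty$. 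The essential mechanism your rooms-and-corridors construction lacks is this positive-measure family of intrinsically unreachable boundary points: countably many rooms, each at finite intrinsic distance, cannot produce it.
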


The regularity of the extensions in Theorem \ref{counterex} fails badly. Namely, we show that there is a set $C$ of positive length on $\pd\D$ such that, for every homeomorphic extension $\Phi\colon \overline{\D}\to\overline{\Oz}$ and any curve $\gamma_z$ connecting an interior point of the unit disk to a point $z$ in $C,$
we have that the Euclidean length of the image curve $\Phi(\gamma_z)$ is infinite.

Previous assumptions on the Jordan domain $\Omega$ that give positive results for the homeomorphic $W^{1,p}$-extendability, 
for $p \in [1,2)$, include, for instance, the rectifiability of the Jordan curve (Koski and Onninen \cite{KoskiOnninen2021}) and the assumption that $\Omega$ is a so-called John disk (Koskela, Koski and Onninen \cite{KoKoOn20}). 
In the latter work, it was speculated that, instead of a John condition, already uniform H\"older continuity of the Riemann mapping  
should suffice. 
This expectation is confirmed by  Theorem \ref{The1}: under a uniform H\"older continuity assumption, $h_{\Omega}(\cdot,z_0)$ is integrable to any power and even exponentially integrable; in fact even the Minkowski dimension of the boundary $\pd \Omega$ is strictly smaller than two under this assumption \cite{JM,KR,SS}. Our integrability assumption \eqref{Lq-integroituva} does not forbid the boundary to have  full dimension two.

The study of Sobolev homeomorphic extensions is in the heart of variational methods in the Geometric Function Theory \cite{AIM2009, HenclKoskela2014, IwaniecMartin2001, Reshetnyak1989} and the Nonlinear Elasticity \cite{Antman1995, Ball1976,
Ciarlet1988}. In the Nonlinear Elasticity, 
the question of a Sobolev homeomorphic extension is motivated by the boundary value problems for elastic deformations. The deformation is a minimizer, where one takes the infimum of a given elastic energy functional among Sobolev
homeomorphisms. Hence one wishes to know when the competing class of admissible homeomorphisms is
nonempty, i.e., when there is a $W^{1,p}$-extension.  Theorems \ref{The1} and \ref{counterex} give the optimal geometric criterion for target shapes that can correspond to deformations of finite energy.

\section{Sobolev homeomorphic extension, Theorem \ref{The1}}
To define our homeomorphic extension we will use the general extension theorem of Koski and Onninen,  \cite[Theorem 3.1]{KO2023}. 
To this end, we recall the definition of a dyadic family of closed arcs in $\pd \D$.  We denote $\bN := \{0, 1, 2, \dots\}$ and $\bN^+ := \bN\setminus\{0\}$. 
\begin{definition}
	Let $n_0\in\N^+$. A family of closed arcs 
	$$I := \{I_{n,j}\subset\pd \D : n\geq n_0,\ j = 1,2,3,\ldots,2^n\}$$
	is called dyadic, if for every $n\geq n_0$,  arcs $I_{n,j}$, $j = 1,2,3,\ldots,2^n$, are
	\begin{enumerate}[(i)]
		\item of equal length,
		\item pairwise disjoint apart from their endpoints,
		\item covering $\pd \D$,
	\end{enumerate}
	and, for each arc $I_{n,j}$ there are two arcs in $I$ of half the length of $I_{n,j}$ 
	and so that their union is exactly $I_{n,j}$, 
	these arcs are called the children of $I_{n,j}$ and $I_{n,j}$ is their parent.
\end{definition}

\begin{theorem}{\cite[Theorem 3.1]{KO2023}}\label{t0}
	Suppose that $\Omega$ is a Jordan domain and $\phi\colon\pd \D \to \partial\Omega$ is a boundary homeomorphism.
	Suppose that for some $n_0\in\N^+$ there is a dyadic family $I = \{I_{n,j}\}$ of closed arcs in $\pd \D$ such that the following hold:
	\begin{enumerate}[1.]
		\item For each $I_{n,j}$ with $n \geq n_0$ there exists a crosscut $\Gamma_{n,j}$ (i.e., a curve in $\Omega$)   connecting 
				   the endpoints of the boundary arc $\phi(I_{n,j})\subset\partial\Omega$ and such that the estimate
				   \begin{equation}\label{e0}
					\sum^{\infty}_{n = n_0}2^{(p-2)n}\sum^{2^n}_{j = 1}\left(\ell(\Gamma_{n,j})\right)^p<\infty
				   \end{equation}
				   holds. Here $\ell$ stands for the Euclidean length. 
		\item The crosscuts $\Gamma_{n,j}$ for $n\geq n_0$ are all pairwise disjoint apart from their endpoints at the boundary, where $n$ and $j$ are allowed to range over all their possible values.
	\end{enumerate}
	Then $\phi$ admits a homeomorphic extension from $\overline{\D}$ to $\overline{\Omega}$ in the class $W^{1,p}(\D, \C)$.
\end{theorem}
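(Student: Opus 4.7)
The plan is to construct $\Phi\colon\overline\D\to\overline\Oz$ in an essentially combinatorial fashion: build a dyadic triangulation of $\D$ that mirrors the partition of $\Oz$ induced by the crosscuts, and assemble $\Phi$ from maps between matching cells. First I would replace each arc $I_{n,j}$ by the straight chord $L_{n,j}\subset\overline\D$ joining its endpoints. The three chords $L_{n,j},L_{n+1,2j-1},L_{n+1,2j}$ enclose a closed Euclidean triangle $T_{n,j}\subset\overline\D$ with vertices the two endpoints and the midpoint of $I_{n,j}$; the family $\{T_{n,j}:n\ge n_0\}$ (together with the finite chord polygon at the starting level $n_0$) tiles $\overline\D$, and each $T_{n,j}$ has diameter $\simeq 2^{-n}$ and area $\simeq 2^{-2n}$. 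On $\Oz$, hypothesis~(2) guarantees that the three corresponding crosscuts $\Gz_{n,j},\Gz_{n+1,2j-1},\Gz_{n+1,2j}$ bound a closed curvilinear triangle $P_{n,j}\subset\overline\Oz$ whose vertices are the $\phi$-images of those of $T_{n,j}$, and these pieces tile $\overline\Oz$.

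Next I would prescribe $\Phi$ on the 1-skeleton: $\Phi|_{\pd\D}=\phi$, and on every chord $L_{n,j}$ let $\Phi$ be the constant-speed parametrization onto $\Gz_{n,j}$. The different chord parametrizations agree at their shared endpoints, so this is well defined. On each open $T_{n,j}$ I would extend $\Phi$ to a homeomorphism onto $P_{n,j}$ realizing the prescribed boundary values. Concretely, fix a point $y_{n,j}\in P_{n,j}$ and the centroid $g_{n,j}\in T_{n,j}$, split $T_{n,j}$ into three sub-triangles meeting at $g_{n,j}$, and on each sub-triangle linearly interpolate between the arclength-parametrized image of the opposite side of $T_{n,j}$ and the point $y_{n,j}$. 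Applying the standard single-triangle comparison $\int_T|\nabla\Phi|^p\lesssim\operatorname{diam}(T)^{2-p}\operatorname{diam}(P)^p$ to each of the three sub-triangles yields
\[
\int_{T_{n,j}}|\nabla\Phi|^p\;\lesssim\;2^{(p-2)n}\Bigl(\ell(\Gz_{n,j})^p+\ell(\Gz_{n+1,2j-1})^p+\ell(\Gz_{n+1,2j})^p\Bigr).
\]

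Summing over all $n\ge n_0$ and $j\in\{1,\ldots,2^n\}$, the parent terms give exactly the series \eqref{e0}, while the two sums in the children lengths reindex (via $m=n+1$) to the same series multiplied by the convergent factor $2^{2-p}$ (finite since $p<2$); hence $\Phi\in W^{1,p}(\D,\C)$. Boundary continuity and $\Phi|_{\pd\D}=\phi$ follow because, for $z\in\pd\D$, the nested cells $P_{n,j_n(z)}$ are bounded by a single crosscut with endpoints on the arc $\phi(I_{n,j_n(z)})$ of diameter $\to 0$ (uniform continuity of $\phi$), and the disjointness in~(2) forces the diameters of these caps to shrink to $\phi(z)$.

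The main obstacle is to turn this outline into a \emph{bona fide} homeomorphism and justify the sharp per-cell gradient bound. Local injectivity on each $T_{n,j}$, compatibility on shared chords, and injectivity across cells are all built into the construction once we have a genuine homeomorphism on every piece; injectivity at $\pd\D$ is inherited from $\phi$. The delicate point is that $P_{n,j}$ need not be convex, star-shaped, or even visible from any obvious interior point, so a naive radial foliation from $y_{n,j}$ may fail to cover $P_{n,j}$ injectively. One handles this either by choosing $y_{n,j}$ at a point of maximal distance to $\pd P_{n,j}$ and verifying directly that straight rays from $y_{n,j}$ to each crosscut stay in $P_{n,j}$ (using that each crosscut is a simple arc of length comparable to the inradius up to a bounded multiple), or by replacing the radial construction with a piecewise-affine map onto a polygonal approximation of $P_{n,j}$ whose vertices subdivide each crosscut by arclength; in either variant the estimate $|\nabla\Phi|\lesssim\operatorname{diam}(P_{n,j})/\operatorname{diam}(T_{n,j})$ survives, and the remaining verifications reduce to the Jordan--Schoenflies theorem applied cell by cell.
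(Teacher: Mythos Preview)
This theorem is not proved in the present paper; it is quoted from \cite[Theorem~3.1]{KO2023} and invoked as a black box in the proof of Theorem~\ref{The1}, so there is no proof here to compare against.

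Your outline has the correct architecture---dyadic chord-triangulation of $\D$, matching curvilinear cells $P_{n,j}\subset\overline\Oz$ bounded by a parent crosscut and its two children, cellwise extension, and the summation/reindexing for the energy---and this is indeed the scheme in \cite{KO2023}. The summation step and the boundary-continuity argument are essentially correct.

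The genuine gap is precisely where you flag it, and neither of your two proposed fixes closes it. Nothing in the hypotheses makes $P_{n,j}$ star-shaped with respect to any interior point: a crosscut may oscillate arbitrarily, so straight rays from a ``deep'' centre $y_{n,j}$ can exit $P_{n,j}$, and your parenthetical claim that the crosscut length is comparable to the inradius of $P_{n,j}$ is not implied by the assumptions. The piecewise-affine alternative has the same defect: the inscribed polygon on arclength-subdivision points of a wiggly crosscut need not even be simple, and the affine pieces need not stay inside $P_{n,j}$. Separately, the pointwise bound $|\nabla\Phi|\lesssim\operatorname{diam}(P_{n,j})/\operatorname{diam}(T_{n,j})$ is wrong as stated: the tangential derivative of the constant-speed map from a chord of length $\simeq 2^{-n}$ onto a crosscut of length $\ell(\Gz)$ is $\simeq 2^{n}\ell(\Gz)$, which can far exceed $2^{n}\operatorname{diam}(P_{n,j})$ when the crosscut is long but the cell has small diameter. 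The construction in \cite{KO2023} avoids both issues by building the cell map as an interpolation between the arclength parametrizations of the bounding crosscuts themselves rather than from a single interior point; this produces a homeomorphism onto $P_{n,j}$ regardless of its shape and yields the integral bound $\int_{T_{n,j}}|\nabla\Phi|^p\lesssim 2^{(p-2)n}\bigl(\ell(\Gz_{n,j})^p+\ell(\Gz_{n+1,2j-1})^p+\ell(\Gz_{n+1,2j})^p\bigr)$ directly. To complete your argument you need to replace the radial step with such an interpolation; the details are in the cited paper.
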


As we need to sum up lengths of crosscuts $\Gamma_{n,j}$ in \eqref{e0}, 
the key is to have a good upper bound for the tail of the series, i.e., for croscuts corresponding ``short'' dyadic intervals. 
We will choose as our crosscuts conformal images of certain hyperbolic geodesics. 
For the approximation we use, for instance, the following Koebe distortion theorem (see, e.g.,  \cite[Theorem 2.10.8]{AIM2009}).
\begin{theorem}\label{t1.1}
	Suppose that $f\colon\D\to f(\D)$ is a conformal map. Then for all $z,w\in\D$ we have
	\begin{equation*}
		e^{-3h_{\D}(z,w)}\leq \frac{|f'(z)|}{|f'(w)|}\leq  e^{3h_{\D}(z,w)}.
	\end{equation*}
\end{theorem}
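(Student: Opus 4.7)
The plan is to reduce the statement to the classical Koebe distortion theorem for normalized univalent maps---if $F\colon \D \to \C$ is conformal with $F(0)=0$ and $F'(0)=1$, then
$$\frac{1-|\zeta|}{(1+|\zeta|)^{3}} \le |F'(\zeta)| \le \frac{1+|\zeta|}{(1-|\zeta|)^{3}}$$
for every $\zeta\in\D$---by a Möbius change of variables centred at $w$. Fix $w\in\D$ and let $\phi_{w}(\zeta)=(w+\zeta)/(1+\bar w\zeta)$ be the disk automorphism sending $0$ to $w$, for which $\phi_{w}'(\zeta)=(1-|w|^{2})/(1+\bar w\zeta)^{2}$. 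Setting
$$F(\zeta)=\frac{f(\phi_{w}(\zeta))-f(w)}{f'(w)(1-|w|^{2})}$$
produces a conformal map of $\D$ with $F(0)=0$ and $F'(0)=1$, so the classical bounds apply to $F$.

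The next step is a chain-rule computation. Writing $z=\phi_{w}(\zeta)\in\D$, one has
$$|F'(\zeta)| = \frac{|f'(z)|\,|\phi_{w}'(\zeta)|}{|f'(w)|(1-|w|^{2})} = \frac{|f'(z)|}{|f'(w)|\,|1+\bar w\zeta|^{2}}.$$
Inserting this into the Koebe inequality for $F$ and solving for $|f'(z)|/|f'(w)|$ produces estimates that still contain $|1+\bar w\zeta|$. I would then eliminate this factor by the elementary bound $1-|\zeta| \le |1+\bar w\zeta| \le 1+|\zeta|$ (which uses only $|w|\le 1$); this cancels cleanly against the Koebe numerators and denominators and yields
$$\left(\frac{1-|\zeta|}{1+|\zeta|}\right)^{3} \le \frac{|f'(z)|}{|f'(w)|} \le \left(\frac{1+|\zeta|}{1-|\zeta|}\right)^{3}.$$

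Finally, I would identify $|\zeta|$ with the pseudo-hyperbolic distance: since $\zeta=\phi_{w}^{-1}(z)$, a direct computation gives $|\zeta|=|(z-w)/(1-\bar w z)|=:\rho(z,w)$. Using the standard identity $h_{\D}(z,w)=\log\bigl((1+\rho(z,w))/(1-\rho(z,w))\bigr)$ (corresponding to the hyperbolic density $2/(1-|z|^{2})$ implicit in the constant $3$ appearing in the statement) rewrites the two sides of the previous display as $e^{\pm 3h_{\D}(z,w)}$, which is exactly the claim. There is no genuine obstacle: once the Möbius reduction normalises $F$ at the origin, every remaining step is mechanical, and the only conceptual point is recognising the Möbius-invariant quantity $|\zeta|$ as $\rho(z,w)$ and translating it into the hyperbolic metric.
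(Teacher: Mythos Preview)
Your argument is correct: the M\"obius normalisation reduces the claim to the classical Koebe distortion theorem for the class $S$, and the algebra with $|1+\bar w\zeta|$ and the pseudo-hyperbolic distance $\rho(z,w)=|\zeta|$ goes through exactly as you describe. The only delicate point---which you handle explicitly---is the normalisation of the hyperbolic density; the constant $3$ in the exponent fixes the convention $h_{\D}(z,w)=\log\bigl((1+\rho)/(1-\rho)\bigr)$, i.e.\ density $2/(1-|z|^2)$.

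As for the comparison: the paper does not prove this statement at all. It is quoted as a standard form of the Koebe distortion theorem with a reference to \cite[Theorem~2.10.8]{AIM2009} and is used as a black box in the proof of Lemma~\ref{l2.1}. Your write-up therefore supplies a complete, self-contained derivation where the paper simply invokes the literature; the proof you give is in fact the standard one found in references such as Astala--Iwaniec--Martin.
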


The following key lemma gives the needed upper bound for a crosscut.
\begin{lemma}\label{l2.1}
	Suppose that $\Omega$ is a Jordan domain and $f\colon\overline{\D}\to\overline{\Omega}$ is a homeomorphism that is conformal on $\D$. Let us assume, further, that $q\in(1,\infty)$ and
	$$
\int_{\Omega}\left(h_{\Omega}(z,f(0))\right)^q\dz<\infty.
 $$ 
 
 Let $\xi_1,\xi_2\in\pd \D$ be such that $|\xi_1 - \xi_2| \leq \frac{4 \pi}{1 + \pi^2} \approx 1.156$,
 $\gamma$ a hyperbolic geodesic connecting $\xi_1$ and $\xi_2$,
 and $\wt{\gz}$ the shorter-length part of the boundary connecting $\xi_1$ and $\xi_2$.
	Then we have, for the crosscut $\Gamma = f(\gamma)$ joining $f(\xi_1)$ and $f(\xi_2)$, that
 $$
 		\big(\ell(\Gamma)\big)^2\leq c(q)\int_{\Delta} \left(h_{\Omega}(z, f(0))\right)^q\dz,
 $$
 	where $\Delta$ is the region bounded by $\Gamma$ and $f(\wt{\gz})$.
\end{lemma}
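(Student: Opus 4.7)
The plan is to transfer the inequality to the unit disk via the conformal map $f$. By conformal invariance of the hyperbolic metric, $h_{\Omega}(f(z), f(0)) = h_{\D}(z, 0) = \log\frac{1+|z|}{1-|z|}$; denote this quantity by $L(z)$. Applying the change of variables $w = f(z)$ with Jacobian $|f'(z)|^2$ rewrites the target inequality as
\begin{equation*}
\left( \int_{\gamma} |f'(z)| \, |\mathrm{d}z| \right)^2 \leq c(q) \int_{D} L(z)^q |f'(z)|^2 \, \mathrm{d}A(z),
\end{equation*}
where $D = f^{-1}(\Delta) \subset \D$ is the region bounded by $\gamma$ and $\wt{\gz}$. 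Note that the hypothesis on $\Omega$ is no longer visible: it is entirely absorbed into $f'$ and $D$.

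Parametrise $\gamma$ by hyperbolic arc length $s \in \R$, so that $|\mathrm{d}z| = \tfrac{1}{2}(1-|\gamma(s)|^2) \, \mathrm{d}s$ along $\gamma$. For a small universal $r_0 > 0$, define the one-sided hyperbolic tube $T^+ := \{z \in D : h_{\D}(z, \gamma) < r_0\}$. The key geometric fact is that for each $s$, the hyperbolic ray from $\gamma(s)$ perpendicular to $\gamma$ and pointing into $D$ stays in $D$ forever: as a geodesic distinct from $\gamma$ it meets $\gamma$ only at $\gamma(s)$, and it can only escape $D$ through $\wt{\gz} \subset \partial \D$, which is at infinite hyperbolic distance. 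Hence the slice of $T^+$ at $\gamma(s)$ is a full hyperbolic segment of length $r_0$. On $T^+$, the Koebe distortion theorem (Theorem \ref{t1.1}) gives $|f'(z)|$ comparable to $|f'(\gamma(s))|$ for the nearest point $\gamma(s)$, and the triangle inequality yields $|L(z) - L(\gamma(s))| \leq r_0$. A Fubini computation in Fermi coordinates $(s, u) \in \R \times [0, r_0)$ on $T^+$ then produces
\begin{equation*}
\int_{T^+} L(z)^q |f'(z)|^2 \, \mathrm{d}A(z) \geq c(r_0) \int_\R L(\gamma(s))^q |f'(\gamma(s))|^2 (1-|\gamma(s)|^2)^2 \, \mathrm{d}s.
\end{equation*}

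To conclude, apply Cauchy--Schwarz to $\ell(\Gamma) = \int_\R |f'(\gamma(s))| \tfrac{1}{2}(1-|\gamma(s)|^2) \, \mathrm{d}s$, pairing $L(\gamma(s))^{q/2}|f'(\gamma(s))|(1-|\gamma(s)|^2)$ against $L(\gamma(s))^{-q/2}$. The role of the hypothesis $|\xi_1 - \xi_2| \leq \tfrac{4\pi}{1+\pi^2}$ is precisely to guarantee that the point of $\gamma$ closest to the origin has modulus at least $(\pi-1)/(\pi+1)$, so that $L(\gamma(s)) \geq \log \pi > 0$ on all of $\gamma$; combined with the linear growth $L(\gamma(s)) \asymp |s|$ for large $|s|$, this makes $\int_\R L(\gamma(s))^{-q} \, \mathrm{d}s \leq c(q) < \infty$ for every $q > 1$. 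Combining this with the tube estimate and $T^+ \subset D$ yields $\ell(\Gamma)^2 \leq c(q) \int_D L^q |f'|^2 \, \mathrm{d}A = c(q) \int_\Delta h_{\Omega}(\cdot, f(0))^q \, \mathrm{d}A$. The main technical step is the Fubini--Koebe computation on the tube: one has to track constants carefully so that the tube area integral dominates the correctly weighted line integral on $\gamma$.
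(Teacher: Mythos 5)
Your proof is correct and rests on the same conceptual pillars as the paper's---Koebe distortion to control $|f'|$ near $\gamma$, the lower bound $h_{\Omega}(\cdot,f(0))\geq\log\pi$ on $f(\gamma)$ together with its linear growth in arc length, and Cauchy--Schwarz hinging on $\int L(\gamma(s))^{-q}\,\mathrm{d}s<\infty$ for $q>1$---but the implementation is genuinely different. The paper transfers to $\H^+$ via a M\"obius map, discretizes $\gamma$ into dyadic arcs $C_m$ with adjacent Euclidean sectors $A_m$, and sums with the Riemann zeta function; you stay in $\D$, wrap $\gamma$ in a one-sided hyperbolic tube $T^+$ of fixed width $r_0$, and integrate in Fermi coordinates. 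Your observation that the perpendicular geodesic ray from $\gamma(s)$ into $D$ never leaves $D$ (it meets $\gamma$ only at $\gamma(s)$, and $\widetilde{\gamma}\subset\partial\D$ is at infinite hyperbolic distance) is exactly what replaces the paper's explicit verification that the $A_m$ lie in $f\circ T^{-1}$-preimage territory, and it makes the geometry cleaner. Two things worth spelling out in the Fermi--Fubini step: you also need $1-|z|^2\asymp 1-|\gamma(s)|^2$ on $T^+$ (immediate since $h_{\D}(z,\gamma(s))<r_0$), and converting $|L(z)-L(\gamma(s))|\leq r_0$ into $L(z)^q\gtrsim_q L(\gamma(s))^q$ requires $r_0<\log\pi$; this is precisely where the hypothesis $|\xi_1-\xi_2|\leq 4\pi/(1+\pi^2)$ is used, mirroring the paper's estimate $\Im T(0)\geq\pi$. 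The two routes buy comparable constants, with yours avoiding the explicit half-plane coordinate bookkeeping at the small cost of invoking standard Fermi-coordinate area comparison.
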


\begin{proof} For a couple of hyperbolic distance estimates we first map the unit disk to the upper half plane via a Möbius transformation.

	Without loss of generality we may assume that our points of the unit circle are such that $\xi_1 = \overline{\xi_2}$, $\Re \xi_1 > 0$ and $\Im \xi_1 > 0$.
	We consider the Möbius transformation $T$ from the unit disk to the upper half plane $\H^+,$
	$$T(z) = a\,\frac{1-z}{1+z},$$
	where $a = \frac{1+\xi_1}{1-\xi_1}=\frac{i\Im\xi_1}{1-\Re\xi_1}$. Then $T(\xi_1) = 1 = -T(\xi_2)$, $T(1) = 0$ and $T(-1) = \infty$.
It is clear that $T(\D) = \H^+$ and that $T$ maps the hyperbolic geodesic $\gamma$ connecting $\xi_1$ and $\xi_2$ to the upper half of the unit circle, i.e., $\pd \D\cap \H^+$. 

	Moreover, we have $\Re T(0) = 0$. By our assumption,  we have that $|\Im \xi_1 - \Im \xi_2| = |\xi_1 - \xi_2| \leq  \frac{4 \pi}{1 + \pi^2}$ and thus, further, $\Im \xi_1 \leq \frac{2 \pi}{1 + \pi^2}$. Hence
	\begin{equation}\label{e2-1}
		\Im T(0) = |a| = \frac{\Im\xi_1}{1-\Re\xi_1}\geq \inf_{y \in \left(0, 2 \pi /(1 + \pi^2)\right)}\frac{y}{1-\sqrt{(1- y^2)}} = \pi.
	\end{equation}
 
Our goal is to approximate the length of the crosscut $\Gamma = f(\gamma) = f \circ T^{-1}(\pd \D \cap \H^+)$. 
To this end, we divide $\pd \D \cap \H^+$ into smaller arcs $C_m$ and approximate lengths with the help of the Koebe distortion theorem. 
Given $m\in\N^+$ we define $\theta_m := \frac{\pi}{2^m}$ and we set $C_m := \{e^{i\theta}: \theta\in (\theta_{m+1},\theta_m)\}$.
Further, for approximating purposes we define $R_m := 1 - \frac{1}{2^{m + 1}}$ and $z_m := e^{i\theta_m}$ and let
	$$A_m := \left\{re^{i\theta}: r\in (R_m,1]\text{ and }\theta \in (\theta_{m+1},\theta_m)\right\}.$$
	Then, for any $m\in\N^+$ and $\omega\in A_m$, we have that $\Im z \geq R_m \sin(\theta_{m + 1})$, when $z \in [\omega, z_m]$, and thus
	\begin{equation}\label{e2-2}
		\begin{aligned}
			h_{\H^+}(\omega,z_m)&\leq \int_{[\omega,z_m]}\frac{1}{\Im z}\daz\\
			& \leq \frac{(\theta_m - \theta_{m+1}) + (1 - R_m)}{R_m\sin \theta_{m+1}} = \frac{\theta_{m+1} + 2^{-m-1}}{(1 - 2^{-m-1})\sin \theta_{m+1}}\\
			&\leq \frac{2\theta_2}{\sin \theta_2} = \frac{\sqrt{2}}{2}\pi.
		\end{aligned}
		\end{equation}
To deal with angles between $\frac{\pi}{2}$ and $\pi$,	we define $A_{-m} := \left\{z\in\D: -\overline{z}\in A_m\right\}$, $z_{-m} := -\overline{z_m}$ and $C_{-m} = \{ z : -\overline{z} \in C_m \}$ for any $m\in\N^+$.
	Thus, \eqref{e2-2} holds true for all $m\in\Z\setminus\{0\}$. Note that $\{C_m: m \in \Z \setminus\{0\}\}$ covers $\pd \D \cap \H^+$ except for the countable set $\{ z_m : m \in \Z \setminus \{0\} \}$.

	By direct computation, we conclude that for any $m\in\Z\setminus\{0\}$ and $\omega\in A_m$
	\begin{align*}
		h_{\H^+}(\omega, T(0)) \geq \int_{[\Im \omega, \Im T(0)]} \frac{1}{\Im z}\daz = \log \frac{\Im T(0)}{\Im \omega} \geq \log \frac{\Im T(0)}{\sin \theta_{|m|}}.
	\end{align*}
	Because $\sin \theta_{|m|} \in (0,\theta_{|m|})$ for any $m\in\Z\setminus\{0\}$, by \eqref{e2-1} we have
	\begin{equation}\label{e2-3}
		h_{\H^+}(\omega, T(0)) \geq \log\left(\frac{\pi}{\theta_{|m|}}\right) \geq |m|\log 2.
	\end{equation}

	Let $g := f\circ T^{-1}$, then $g\colon \H^+\to\Omega$ is a conformal homeomorphism. Let $m\in\Z \setminus \{ 0\}$ and $\omega\in A_m$. 
We will use the Koebe distortion theorem \ref{t1.1} for $g$. First the theorem implies, for $\nu = T^{-1}(\omega)$ and $z = T^{-1}(z_m)$, 
$$
		e^{-3h_{\D}(\nu,z)}\leq \frac{|f'(\nu)|}{|f'(z)|}\leq  e^{3h_{\D}(\nu,z)}.
$$
As the hyperbolic metric is invariant under a conformal map, we change the hyperbolic metric from $\D$ to $\H^+$ by our Möbius transformation $T$ and we have
$$
		e^{-3h_{\H^+}(T(\nu),T(z))}\leq \frac{|f'(\nu)|}{|f'(z)|}\leq  e^{3h_{\H^+}(T(\nu),T(z))}.
$$
Note
$$
\frac{|f'(\nu)|}{|f'(z)|} = \frac{|g'(\omega)|}{|g'(z_m)|}\frac{|(T^{-1})'(z_m)|}{|(T^{-1})'(\omega)|} = \frac{|g'(\omega)|}{|g'(z_m)|}\frac{|\omega + a|^2}{|z_m + a|^2}
$$
and recall that $a$ is a purely imaginary number and $\omega \in A_m$. Hence, 
$$\frac{R_{|m|}}{|R_{|m|} e^{i\theta_{|m|+1}} + a|} < 1,$$
and, by triangle inequality,
\begin{align*}
    |e^{i\theta_{|m|}} + a| 
    & \leq |R_{|m|} e^{i\theta_{|m|+1}} + a| + (1 - R_{|m|}) + (\theta_{|m|} - \theta_{|m| + 1})\\
    & = \frac{|R_{|m|} e^{i\theta_{|m|+1}} + a|}{R_{|m|}}\left[R_{|m|} + (1 - R_{|m|} + \theta_{|m| + 1})\frac{R_{|m|}}{|R_{|m|} e^{i\theta_{|m|+1}} + a| }\right]\\
    & \leq |R_{|m|} e^{i\theta_{|m|+1}} + a| \frac{1 + \tz_{|m| + 1}}{R_{|m|}}.
\end{align*}
Then we can conclude that
$$
\frac{1}{6} 
< \frac{R_1^2}{(1 + \tz_2)^2}
\leq \frac{R_{|m|}^2}{(1 + \tz_{|m|+1})^2} 
\leq \frac{|R_{|m|} e^{i\theta_{|m|+1}} + a|^2}{|e^{i\theta_{|m|}} + a|^2} 
\leq \frac{|\omega + a|^2}{|z_m + a|^2} \leq 1.
$$

Thus combining our estimates we know that for any $m\in\Z \setminus \{ 0\}$ and $\omega\in A_m$
\begin{equation}\label{e2-4}
		e^{-3c_0}|g'(z_m)|\leq |g'(\omega)|\leq 6e^{3c_0}|g'(z_m)|,
	\end{equation}
	where $c_0 = \frac{\sqrt{2}}{2}\pi$, by \eqref{e2-2}. 
 
 For any $m\in\Z \setminus \{0\}$ we define 
	$B_m := g(A_m)$, 
	$\omega_m := g(z_m)$ and
	$l_m := g(C_m)$. 
 We are approximating the Euclidean length of the crosscut $\Gamma = f(\gamma) = f \circ T^{-1}(\pd \D \cap \H^+) = g(\pd \D \cap \H^+)$, i.e., $\ell(\Gamma) = \ell (g(\pd\D\cap\H^+)) 
		 = \sum_{m\in\Z\setminus\{0\}} \ell(g(C_m)) = \sum_{m\in\Z\setminus\{0\}} \ell(l_m)$.
 Now,
 for any $m\in\Z \setminus \{ 0 \}$
\begin{equation}\label{e2-5}
		\ell(l_m) = \int_{C_m}|g'(z)|\daz\leq \frac{3\pi e^{3c_0}}{2^{|m|}}|g'(z_m)|.
	\end{equation}

To get upper bound in terms of $L^q$-integral of $h_{\Omega}$, we need to approximate $|g'(z_m)|$.
By our Koebe distortion estimate \eqref{e2-4}, we conclude that
	\begin{align*}
		|B_m| = |g(A_m)| 
		& = \int_{A_m}|g'(z)|^2\dz\\
		& \geq |A_m| \,e^{-6c_0}|g'(z_m)|^2\\[0.1cm]
		& = \frac{\theta_{|m|+1}}{2}\left(1 - R_{|m|}^2\right)e^{-6c_0}|g'(z_m)|^2\\[0.1cm]
		& \geq \frac{\pi}{2^{2|m| + 3}}\,e^{-6c_0}\,|g'(z_m)|^2.
	\end{align*}
	Here we denote, in addition of the modulus, also the two-dimensional Lebesgue measure with $|\cdot|$, when talking about sets.
We estimate $|B_m|$, by \eqref{e2-3}, that yields
	\begin{align*}
		\int_{B_m} \left(h_{\Omega}(\omega, f(0))\right)^q\domega
		& = \int_{B_m} \left(h_{g^{-1}(\Omega)}(g^{-1}(\omega), g^{-1}(f(0)))\right)^q\domega\\
		& = \int_{B_m} \left(h_{\H^+}(g^{-1}(\omega), T(0))\right)^q\domega \geq |B_m| \,|m|^q \left(\log 2\right)^q.
	\end{align*}	
	Thus
	\begin{equation*}
		\int_{B_m} \left(h_{\Omega}(\omega, f(0))\right)^q\domega \geq \frac{\pi e^{-6c_0}(\log 2)^q}{2^{2|m|+3}} |g'(z_m)|^2 |m|^q.
	\end{equation*}
	Together with \eqref{e2-5}, it gives that for any $m\in\Z \setminus \{0 \}$
\begin{equation}\label{2e-6}
		\ell(l_m) \leq c_1 |m|^{-\frac{q}2}\left(\int_{B_m} \left(h_{\Omega}(\omega, f(0))\right)^q\domega\right)^{\frac12},
	\end{equation}
	where $c_1 = \left[\frac{72\pi e^{6\sqrt{2}\pi}}{(\log 2)^q}\right]^{1/2}$. 
	
Recall that $B_m = f\circ T^{-1}(A_m)$ and $\bigcup_{m \in \Z \setminus \{ 0\}}A_m \subset \overline{\D} \cap \H^+$, then 
$$\bigcup_{m \in \Z \setminus \{ 0\} }B_m\subset f\circ T^{-1}(\overline{\D} \cap \H^+) = \left( \Delta\cup \bigcup_{m \in \Z \setminus \{ 0\} }f\circ T^{-1}(C_m)\right),$$ 
where $\Delta$ is the region bounded by $\Gamma$ and $f\circ T^{-1} (\{(x,0): -1\leq x\leq 1\})$.
Noticing that $B_m$s are pairwise disjoint and $\left|\bigcup_{m \in \Z \setminus \{ 0\} }f\circ T^{-1}(C_m)\right| = 0$, we conclude, by the Cauchy-Schwarz inequality, 
	\begin{align*}
		\ell (\Gamma) = \ell (g(\pd \D\cap\H^+)) 
		& = \sum_{m\in\Z\setminus\{0\}} \ell(g(C_m)) = \sum_{m\in\Z\setminus\{0\}} \ell(l_m)\\
		& \leq 2c_1\sum_{m = 1}^{\infty} m^{-\frac{q}2}\left(\int_{B_m} \left(h_{\Omega}(\omega, f(0))\right)^q\domega\right)^{\frac12}\\
		& \leq 2c_1\left(\sum_{m = 1}^{\infty}m^{-q}\right)^{1/2} \left(\sum_{m = 1}^{\infty}\left(\int_{B_m} \left(h_{\Omega}(\omega, f(0))\right)^q\domega\right)\right)^{\frac12}\\
		& \leq 2c_1\left(\zeta(q)\right)^{1/2} \left(\int_{\Delta} \left(h_{\Omega}(\omega, f(0))\right)^q\domega\right)^{\frac12},
	\end{align*}
 where $\zeta$ is the Riemann zeta function. Our assumption is that  $q \in (1, \infty)$ and it is known that in this situation $\zeta(q)<\infty$ (and $\lim\limits_{q\to 1^+} \zeta(q) = \infty$). 
\end{proof}

\begin{definition}
	Let $k\in\N$. A sequence of points $\{x_0,x_1,\ldots,x_k\}\subset\pd \D$ is called a cycle if there exist
	$0 < \theta_1 <\ldots<\theta_k<2\pi$ such that $x_j = e^{i\theta_j}x_0$ for $j=1,...,k$.
\end{definition}

\begin{proof}[Proof of Theorem \ref{The1}]
We will use the general extension theorem \ref{t0} of Koski and Onninen,  \cite[Theorem 3.1]{KO2023} and make the necessary approximation of the lengths of crosscuts by Lemma \ref{l2.1}. 

As $\Omega$ is a Jordan domain, by the Riemann mapping theorem, there is a conformal homeomorphism $f\colon\D\to\Omega$ such that $f(0) = z_0$. The map $f$ can be extended homeomorphically to $\overline{\D}$ so that $f(\pd \D) = \partial\Omega$, by the Carath\'eodory-Osgood theorem. We use the same notation $f$ for the extended homeomorphism.

Recall that we want to extend $\phi \colon \pd\D \to \pd \Omega$  in the class $W^{1,p}(\D)$.
Fix $k\in\N$ and consider the cycle $\{x_j\}^{k}_{j = 0}\subset \pd\D$. We denote $x_{k+1} = x_0$ and $\xi_j = f^{-1}(\phi(x_j))$ for any $j = 0,\ldots,k + 1$. 
 Notice that $\xi_j \in \pd\D$ and for any $|\xi_{j + 1} - \xi_j| \leq \frac{4 \pi}{1 + \pi^2}$, $j = 1, \ldots, k$, we have a upper bound for the length of the crosscut, by Lemma \ref{l2.1}. This approximation is needed to show the convergence of the series \eqref{e0}.
 
To use Lemma \ref{l2.1} 
 we need to find a dyadic family $I := \{I_{n,j}\subset\pd \D : n\geq n_0,\ j = 1,2,3,\ldots,2^n\}$ such that the endpoints $x_j$ of dyadic intervals $I_{n, j}$ have the aforementioned property that, for $\xi_j = f^{-1}(\phi(x_j))$,
 $|\xi_{j + 1} - \xi_j| \leq \frac{4 \pi}{1 + \pi^2}$. 
 
 We first choose the starting level $n_0$ of our dyadic family $I$.
Let $N\in\N$ and $\{P_0,\ldots,P_{N}\}$ be some cycle on $\pd \D$ such that $|P_{j + 1} - P_{j}| \leq \frac{2 \pi}{1 + \pi^2}$  for all $j = 0,\ldots, N$.
Then there is a corresponding point set $\{y_0,\ldots,y_{N}\}$ on $\pd \D$ such that $y_j = \phi^{-1}(f(P_j))$. Because dyadic points are dense in $\pd \D$, we can find the smallest $n_0\in\N$, 
such that there is at most one $y_j$ in each $n_0$-level dyadic arc. 
For any $n\geq n_0$,
denote  $n$-level dyadic points by $\{x_1,x_2,\ldots,x_{2^n}\}$. By the choice of $n_0$, we have
$$\max_{j = 1,2,\ldots,2^n}|f^{-1}(\phi(x_{j+1})) - f^{-1}(\phi(x_{j}))| \leq 2\max_{j = 1,\ldots,N}|P_{j+1} - P_{j}|\leq \frac{4 \pi}{1 + \pi^2}.$$

Now, on level $n$, we take the crosscut  $\Gamma_{n, j} = f(\gamma_{n, j})$ joining $f(\xi_{j}) = \phi(x_j)$ and $f(\xi_{j + 1}) = \phi(x_{j + 1})$. Here $\gamma_{n, j}$ is a hyperbolic geodesic connecting $\xi_{j + 1} = f^{-1}(\phi(x_{j+1}))$ and $\xi_{j} = f^{-1}(\phi(x_j))$. By Lemma \ref{l2.1},
 $$
 		\sum_{j = 1}^{2^n}\big(\ell(\Gamma_{n,  j})\big)^2\leq C(q)\sum_{j = 1}^{2^n}\int_{\Delta_{n, j}} \left(h_{\Omega}(z, z_0)\right)^q\dz \leq C(q)\int_{\Omega} \left(h_{\Omega}(z, z_0)\right)^q\dz =: M,
 $$
 	where $\Delta_{n, j}$ is the region defined in Lemma \ref{l2.1} with respect to the points $\xi_{j}$ and $\xi_{j + 1}$.

Then by H\"older's inequality for $p\in [1,2)$ we have
\begin{align*}
	\sum^{\infty}_{n = n_0}2^{(p-2)n}\sum^{2^n}_{j = 1}(\ell(\Gamma_{n,j}))^p
	&\leq \sum^{\infty}_{n = n_0}2^{(p-2)n} 2^{n(1 - \frac{p}2)}\left(\sum^{2^n}_{j = 1}(\ell(\Gamma_{n,j}))^2\right)^{\frac{p}2}\\
	&\leq \sum^{\infty}_{n = n_0} 2^{n(\frac{p}2 - 1)} M^{\frac{p}2} <\infty
\end{align*}
and \eqref{e0} is satisfied. 
As homeomorphic images of geodesics, our crosscuts are pairwise disjoint apart from their endpoints, and thus, by Theorem \ref{t0}, we have shown that the boundary homeomorphism $\phi\colon\pd \D\to\partial\Omega$ admits a homeomorphic extension from $\overline{\D}$ to $\overline{\Omega}$ in the class $W^{1,p}(\D, \C)$.
\end{proof}

\section{Counterexample, Theorem \ref{counterex}}

 In this section, we prove Theorem \ref{counterex}. 
 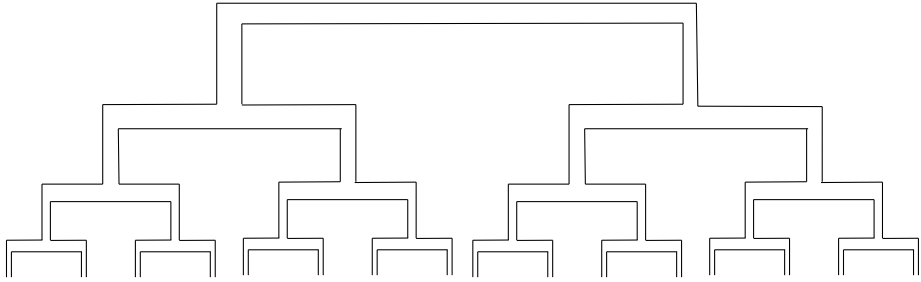
\begin{figure}[h]
 \begin{tikzpicture}[x=0.75pt,y=0.75pt,yscale=-1,xscale=1]

\draw    (210.89,103.44) -- (450.2,103.44) ;
\draw    (223.44,113.83) -- (443.56,113.44) ;
\draw    (210.89,103.44) -- (210.89,154.24) ;
\draw    (223.44,113.83) -- (223.44,154.24) ;
\draw    (154.01,154.44) -- (210.89,154.24) ;
\draw    (161.79,166.58) -- (273.28,166.58) ;
\draw    (154.01,154.44) -- (154.01,194.44) ;
\draw    (272.54,166.58) -- (272.54,193.24) ;
\draw    (280.32,154.44) -- (280.32,193.24) ;
\draw    (223.44,154.64) -- (280.32,154.44) ;
\draw    (127.9,203.24) -- (188.01,203.24) ;
\draw    (123.71,194.44) -- (123.71,222.57) ;
\draw    (127.9,203.24) -- (127.9,222.57) ;
\draw    (188.01,203.24) -- (188.01,222.57) ;
\draw    (192.21,194.44) -- (192.21,222.57) ;
\draw    (106,222.67) -- (123.94,222.57) ;
\draw    (108.45,228.48) -- (143.62,228.48) ;
\draw    (106,222.67) -- (106,241.24) ;
\draw    (108.45,228.48) -- (108.45,241.24) ;
\draw    (143.39,228.48) -- (143.39,241.24) ;
\draw    (145.84,222.67) -- (145.84,241.24) ;
\draw    (127.9,222.77) -- (145.84,222.67) ;
\draw    (170.31,222.67) -- (188.25,222.57) ;
\draw    (172.76,228.48) -- (207.93,228.48) ;
\draw    (170.31,222.67) -- (170.31,241.24) ;
\draw    (172.76,228.48) -- (172.76,241.24) ;
\draw    (207.69,228.48) -- (207.69,241.24) ;
\draw    (210.15,222.67) -- (210.15,241.24) ;
\draw    (192.21,222.77) -- (210.15,222.67) ;
\draw    (241.89,193.44) -- (272.54,193.24) ;
\draw    (246.08,202.24) -- (306.2,202.24) ;
\draw    (241.89,193.44) -- (241.89,221.57) ;
\draw    (246.08,202.24) -- (246.08,221.57) ;
\draw    (306.2,202.24) -- (306.2,221.57) ;
\draw    (310.39,193.44) -- (310.39,221.57) ;
\draw    (224.18,221.67) -- (242.12,221.57) ;
\draw    (226.64,227.48) -- (261.8,227.48) ;
\draw    (224.18,221.67) -- (224.18,240.24) ;
\draw    (226.64,227.48) -- (226.64,240.24) ;
\draw    (261.57,227.48) -- (261.57,240.24) ;
\draw    (264.02,221.67) -- (264.02,240.24) ;
\draw    (246.08,221.77) -- (264.02,221.67) ;
\draw    (288.49,221.67) -- (306.43,221.57) ;
\draw    (290.94,227.48) -- (326.11,227.48) ;
\draw    (288.49,221.67) -- (288.49,240.24) ;
\draw    (290.94,227.48) -- (290.94,240.24) ;
\draw    (325.88,227.48) -- (325.88,240.24) ;
\draw    (328.33,221.67) -- (328.33,240.24) ;
\draw    (310.39,221.77) -- (328.33,221.67) ;
\draw    (279.74,193.64) -- (310.39,193.44) ;
\draw    (123.71,194.44) -- (154.01,194.44) ;
\draw    (161.79,166.58) -- (162.14,194.44) ;
\draw    (162.14,194.44) -- (192.21,194.44) ;
\draw    (443.56,113.44) -- (443.56,154.24) ;
\draw    (450.2,103.44) -- (450.94,155.24) ;
\draw    (386.68,154.44) -- (443.56,154.24) ;
\draw    (394.46,166.58) -- (505.95,166.58) ;
\draw    (386.68,154.44) -- (386.68,194.44) ;
\draw    (505.21,166.58) -- (505.21,193.24) ;
\draw    (513,155.44) -- (512.99,193.24) ;
\draw    (450.94,155.24) -- (513,155.44) ;
\draw    (360.57,203.24) -- (420.68,203.24) ;
\draw    (356.38,194.44) -- (356.38,222.57) ;
\draw    (360.57,203.24) -- (360.57,222.57) ;
\draw    (420.68,203.24) -- (420.68,222.57) ;
\draw    (424.88,194.44) -- (424.88,222.57) ;
\draw    (338.67,222.67) -- (356.61,222.57) ;
\draw    (341.12,228.48) -- (376.29,228.48) ;
\draw    (338.67,222.67) -- (338.67,241.24) ;
\draw    (341.12,228.48) -- (341.12,241.24) ;
\draw    (376.06,228.48) -- (376.06,241.24) ;
\draw    (378.51,222.67) -- (378.51,241.24) ;
\draw    (360.57,222.77) -- (378.51,222.67) ;
\draw    (402.98,222.67) -- (420.92,222.57) ;
\draw    (405.43,228.48) -- (440.6,228.48) ;
\draw    (402.98,222.67) -- (402.98,241.24) ;
\draw    (405.43,228.48) -- (405.43,241.24) ;
\draw    (440.36,228.48) -- (440.36,241.24) ;
\draw    (442.82,222.67) -- (442.82,241.24) ;
\draw    (424.88,222.77) -- (442.82,222.67) ;
\draw    (474.56,193.44) -- (505.21,193.24) ;
\draw    (478.75,202.24) -- (538.87,202.24) ;
\draw    (474.56,193.44) -- (474.56,221.57) ;
\draw    (478.75,202.24) -- (478.75,221.57) ;
\draw    (538.87,202.24) -- (538.87,221.57) ;
\draw    (543.06,193.44) -- (543.06,221.57) ;
\draw    (456.85,221.67) -- (474.79,221.57) ;
\draw    (459.31,227.48) -- (494.47,227.48) ;
\draw    (456.85,221.67) -- (456.85,240.24) ;
\draw    (459.31,227.48) -- (459.31,240.24) ;
\draw    (494.24,227.48) -- (494.24,240.24) ;
\draw    (496.69,221.67) -- (496.69,240.24) ;
\draw    (478.75,221.77) -- (496.69,221.67) ;
\draw    (521.16,221.67) -- (539.1,221.57) ;
\draw    (523.61,227.48) -- (558.78,227.48) ;
\draw    (521.16,221.67) -- (521.16,240.24) ;
\draw    (523.61,227.48) -- (523.61,240.24) ;
\draw    (558.55,227.48) -- (558.55,240.24) ;
\draw    (561,221.67) -- (561,240.24) ;
\draw    (543.06,221.77) -- (561,221.67) ;
\draw    (512.41,193.64) -- (543.06,193.44) ;
\draw    (356.38,194.44) -- (386.68,194.44) ;
\draw    (394.46,166.58) -- (394.81,194.44) ;
\draw    (394.81,194.44) -- (424.88,194.44) ;

\end{tikzpicture}
\caption{Schematic picture of the Jordan domain}\label{schema}
\end{figure}

 Before the proof, we fix some notation. The symbol $A\lesssim B$ means that $A\leq c B$ for some positive constant $c$,
while $A\sim B$ means $A\lesssim B\lesssim A$.
If $f\leq cg$ and $g = h$ or $g \leq h$, we then write $f\lesssim g \sim h$ or $f\lesssim g\lesssim h$,
rather than $f\lesssim g = h$ or $f\lesssim g\leq h$.    For any $x,y\in\R^2$,  $[x,y]$  is a line segment between $x$ and $y$.
    Moreover, for given lines (or line segments) $\az,\bz\subset \R^2$,
    we write $\az \perp \bz$ if they are orthogonal to each other. For any $r\in\R$, 
$\lceil r \rceil := \min\{n\in\Z: n\geq r\}.$
 
    Let $C \subset [0, 1]$ be a Smith--Volterra--Cantor set, i.e., we remove an open interval with the 
    length $4^{-n}$ from the middle of each of the $2^{n - 1}$ intervals in $n$-step construction of $C$. 
    From the left to the right, we denote by $I_{n,1},\,\dots,\,I_{n,2^n}$ the 
    remaining closed intervals in the $n$-step. They have the length $\frac{(1 + 2^{-n})}{2^{n + 1}}$.

We construct  a tree-like Jordan domain, in which one needs to travel  infinitely long to reach a point of $C$; the points are at the ends of branches, see the schematic picture in Figure \ref{schema}. This gives the desired counterexample when the parametrization is chosen suitably.

    \subsection{Construction of  a tree-type core \texorpdfstring{$\gz$ of the Jordan domain}{Lg}\label{ss1}}
    For any $n\in\bN^+$, we first construct a tree-type structure $\gz_n\subset [0,1]\times[0,1]$ that gives the core bones of our domain.
    For any $k\in\bN^+$ such that $k\leq n$,
    consider a canonical rectangle $I_{k,j}\times [2^{-k},2^{-(k + 1)}]$, where $j = \{1,\dots,2^k\}$. 
    
    Denote by $p_{k,j}$ the middle point of the interval $I_{k,j} \times \{2^{-k}\}$,
    and by $q_{k,j}$ the middle point of the interval $I_{k,j} \times \{2^{-(k + 1)}\}$.
    Let $p_0$ be the middle point of the line segment $[p_{1,1}, p_{1,2}]$.
    Divide the line segment $[p_{k,j},q_{k,j}]$ into $2^{k + 1}$ equivalent segments. 
    The segments with ending points $p_{k,j}$ and $q_{k,j}$ are kept fixed. 
    For the rest $2^{k + 1} - 2$ segments, 
    we alternatively push them onto the left or the right side of the square $I_{k,j}\times [2^{-k},2^{-(k + 1)}]$
    in a way that any two of the new segments do not connect.
    Join each pair of these segments by horizontal line segments, we obtain a ``snake" curve $\wt{\gz}_{k,j}$ connecting $p_{k,j}$ and $q_{k,j}$. 
    Let
    $$\wt{\gz}_n = \bigcup^n_{k = 1}\bigcup^{2^{k - 1}}_{i = 1} \wt{\gz}_{k,2(i - 1)}\cup \wt{\gz}_{k,2i} \cup [p_{k,2(i - 1)},p_{k,2i}].$$
    See the figure below.

    \smallskip
    \tikzset{every picture/.style={line width=0.75pt}} 

\begin{tikzpicture}[x=0.6pt,y=0.75pt,yscale=-1,xscale=1]

\draw    (310.5,134) -- (470.5,134) ;
\draw    (150.5,134) -- (150.5,169.44) ;
\draw    (59,169.44) -- (150.5,169.44) ;
\draw    (59,169.44) -- (59,204.89) ;
\draw    (59,204.89) -- (243.5,204.89) ;
\draw    (243.5,204.89) -- (243.5,240.33) ;
\draw    (150.5,240.33) -- (243.5,240.33) ;
\draw    (151.25,240.33) -- (151.25,275.77) ;
\draw    (470.5,134) -- (470.5,169.44) ;
\draw    (150.5,134) -- (470.5,134) ;
\draw    (379,169.44) -- (470.5,169.44) ;
\draw    (380,169.44) -- (380,204.89) ;
\draw    (380,204.89) -- (564.5,204.89) ;
\draw    (564.5,204.89) -- (564.5,240.33) ;
\draw    (471.5,240.33) -- (564.5,240.33) ;
\draw    (472.25,240.33) -- (472.25,275.77) ;
\draw  [dash pattern={on 0.84pt off 2.51pt}]  (310.5,134) -- (312,418.44) ;
\draw    (93.47,275.13) -- (93.47,284.74) ;
\draw    (59,284.74) -- (93.47,284.74) ;
\draw    (59,284.74) -- (59,294.35) ;
\draw    (59,294.35) -- (128.5,294.35) ;
\draw    (128.5,294.35) -- (128.5,303.96) ;
\draw    (59,303.96) -- (128.5,303.96) ;
\draw    (128.5,294.35) -- (128.5,303.96) ;
\draw    (59,303.96) -- (59,313.58) ;
\draw    (128.5,313.45) -- (128.5,323.06) ;
\draw    (59,322.93) -- (59,332.54) ;
\draw    (128.5,332.28) -- (128.5,341.89) ;
\draw    (93.75,341.83) -- (93.75,351.44) ;
\draw    (59,313.45) -- (128.5,313.45) ;
\draw    (59,332.54) -- (128.5,332.54) ;
\draw    (59,322.93) -- (128.5,322.93) ;
\draw    (93.75,341.83) -- (128.5,341.83) ;
\draw    (208.47,275.13) -- (208.47,284.74) ;
\draw    (174,284.74) -- (208.47,284.74) ;
\draw    (174,284.74) -- (174,294.35) ;
\draw    (174,294.35) -- (243.5,294.35) ;
\draw    (243.5,294.35) -- (243.5,303.96) ;
\draw    (174,303.96) -- (243.5,303.96) ;
\draw    (243.5,294.35) -- (243.5,303.96) ;
\draw    (174,303.96) -- (174,313.58) ;
\draw    (243.5,313.45) -- (243.5,323.06) ;
\draw    (174,322.93) -- (174,332.54) ;
\draw    (243.5,332.28) -- (243.5,341.89) ;
\draw    (208.75,341.83) -- (208.75,351.44) ;
\draw    (174,313.45) -- (243.5,313.45) ;
\draw    (174,332.54) -- (243.5,332.54) ;
\draw    (174,322.93) -- (243.5,322.93) ;
\draw    (208.75,341.83) -- (243.5,341.83) ;
\draw    (93.47,275.13) -- (208.47,275.13) ;
\draw    (413.47,275.13) -- (413.47,284.61) ;
\draw    (379,284.61) -- (413.47,284.61) ;
\draw    (379,284.61) -- (379,294.1) ;
\draw    (379,294.1) -- (448.5,294.1) ;
\draw    (448.5,294.1) -- (448.5,303.59) ;
\draw    (379,303.59) -- (448.5,303.59) ;
\draw    (448.5,294.1) -- (448.5,303.59) ;
\draw    (379,303.59) -- (379,313.07) ;
\draw    (448.5,312.94) -- (448.5,322.43) ;
\draw    (379,322.3) -- (379,331.79) ;
\draw    (448.5,331.53) -- (448.5,341.02) ;
\draw    (413.75,340.95) -- (413.75,350.44) ;
\draw    (379,312.94) -- (448.5,312.94) ;
\draw    (379,331.79) -- (448.5,331.79) ;
\draw    (379,322.3) -- (448.5,322.3) ;
\draw    (413.75,340.95) -- (448.5,340.95) ;
\draw    (529.47,275.13) -- (529.47,284.61) ;
\draw    (495,284.61) -- (529.47,284.61) ;
\draw    (495,284.61) -- (495,294.1) ;
\draw    (495,294.1) -- (564.5,294.1) ;
\draw    (564.5,294.1) -- (564.5,303.59) ;
\draw    (495,303.59) -- (564.5,303.59) ;
\draw    (564.5,294.1) -- (564.5,303.59) ;
\draw    (495,303.59) -- (495,313.07) ;
\draw    (564.5,312.94) -- (564.5,322.43) ;
\draw    (495,322.3) -- (495,331.79) ;
\draw    (564.5,331.53) -- (564.5,341.02) ;
\draw    (529.75,340.95) -- (529.75,350.44) ;
\draw    (495,312.94) -- (564.5,312.94) ;
\draw    (495,331.79) -- (564.5,331.79) ;
\draw    (495,322.3) -- (564.5,322.3) ;
\draw    (529.75,340.95) -- (564.5,340.95) ;
\draw    (413.47,275.13) -- (529.47,275.13) ;
\draw    (72.14,351.44) -- (72.14,353.45) ;
\draw    (59,353.45) -- (72.14,353.45) ;
\draw    (59,353.45) -- (59,355.45) ;
\draw    (59,355.45) -- (85.5,355.45) ;
\draw    (85.5,355.45) -- (85.5,357.46) ;
\draw    (59,357.46) -- (85.5,357.46) ;
\draw    (85.5,355.45) -- (85.5,357.46) ;
\draw    (59,357.46) -- (59,359.47) ;
\draw    (85.5,359.44) -- (85.5,361.45) ;
\draw    (59,361.42) -- (59,363.43) ;
\draw    (85.5,363.38) -- (85.5,365.39) ;
\draw    (59,359.44) -- (85.5,359.44) ;
\draw    (59,363.43) -- (85.5,363.43) ;
\draw    (59,361.42) -- (85.5,361.42) ;
\draw    (59,365.44) -- (85.5,365.37) ;
\draw    (59,365.44) -- (59,367.45) ;
\draw    (59,367.45) -- (85.5,367.45) ;
\draw    (85.5,367.45) -- (85.5,369.45) ;
\draw    (59,369.45) -- (85.5,369.45) ;
\draw    (85.5,367.45) -- (85.5,369.45) ;
\draw    (59,369.45) -- (59,371.46) ;
\draw    (85.5,371.44) -- (85.5,373.44) ;
\draw    (59,373.42) -- (59,375.42) ;
\draw    (85.5,375.37) -- (85.5,377.38) ;
\draw    (59,371.44) -- (85.5,371.44) ;
\draw    (59,375.42) -- (85.5,375.42) ;
\draw    (59,373.42) -- (85.5,373.42) ;
\draw    (85.5,375.5) -- (85.5,377.51) ;
\draw    (59,377.48) -- (59,379.49) ;
\draw    (85.5,379.43) -- (85.5,381.44) ;
\draw    (72.25,381.43) -- (72.25,383.44) ;
\draw    (59,379.49) -- (85.5,379.49) ;
\draw    (72.25,381.43) -- (85.5,381.43) ;
\draw    (59,377.48) -- (85.5,377.51) ;
\draw    (115.14,351.44) -- (115.14,353.45) ;
\draw    (102,353.45) -- (115.14,353.45) ;
\draw    (102,353.45) -- (102,355.45) ;
\draw    (102,355.45) -- (128.5,355.45) ;
\draw    (128.5,355.45) -- (128.5,357.46) ;
\draw    (102,357.46) -- (128.5,357.46) ;
\draw    (128.5,355.45) -- (128.5,357.46) ;
\draw    (102,357.46) -- (102,359.47) ;
\draw    (128.5,359.44) -- (128.5,361.45) ;
\draw    (102,361.42) -- (102,363.43) ;
\draw    (128.5,363.38) -- (128.5,365.39) ;
\draw    (102,359.44) -- (128.5,359.44) ;
\draw    (102,363.43) -- (128.5,363.43) ;
\draw    (102,361.42) -- (128.5,361.42) ;
\draw    (102,365.44) -- (128.5,365.37) ;
\draw    (102,365.44) -- (102,367.45) ;
\draw    (102,367.45) -- (128.5,367.45) ;
\draw    (128.5,367.45) -- (128.5,369.45) ;
\draw    (102,369.45) -- (128.5,369.45) ;
\draw    (128.5,367.45) -- (128.5,369.45) ;
\draw    (102,369.45) -- (102,371.46) ;
\draw    (128.5,371.44) -- (128.5,373.44) ;
\draw    (102,373.42) -- (102,375.42) ;
\draw    (128.5,375.37) -- (128.5,377.38) ;
\draw    (102,371.44) -- (128.5,371.44) ;
\draw    (102,375.42) -- (128.5,375.42) ;
\draw    (102,373.42) -- (128.5,373.42) ;
\draw    (128.5,375.5) -- (128.5,377.51) ;
\draw    (102,377.48) -- (102,379.49) ;
\draw    (128.5,379.43) -- (128.5,381.44) ;
\draw    (115.25,381.43) -- (115.25,383.44) ;
\draw    (102,379.49) -- (128.5,379.49) ;
\draw    (115.25,381.43) -- (128.5,381.43) ;
\draw    (102,377.48) -- (128.5,377.51) ;
\draw    (72.14,351.44) -- (115.14,351.44) ;
\draw  [dash pattern={on 0.84pt off 2.51pt}]  (243.5,133) -- (243.5,420.44) ;
\draw  [dash pattern={on 0.84pt off 2.51pt}]  (380.5,133) -- (379.5,420.44) ;
\draw  [dash pattern={on 0.84pt off 2.51pt}]  (565,121.8) -- (564.5,418.44) ;
\draw    (243.5,420.44) -- (243.5,403.44) ;
\draw    (379.5,420.44) -- (379.5,403.44) ;
\draw    (320,411.44) -- (377.5,411.44) ;
\draw [shift={(379.5,411.44)}, rotate = 180] [color={rgb, 255:red, 0; green, 0; blue, 0 }  ][line width=0.75]    (10.93,-3.29) .. controls (6.95,-1.4) and (3.31,-0.3) .. (0,0) .. controls (3.31,0.3) and (6.95,1.4) .. (10.93,3.29)   ;
\draw    (300,411.44) -- (246.5,411.44) ;
\draw [shift={(244.5,411.44)}, rotate = 360] [color={rgb, 255:red, 0; green, 0; blue, 0 }  ][line width=0.75]    (10.93,-3.29) .. controls (6.95,-1.4) and (3.31,-0.3) .. (0,0) .. controls (3.31,0.3) and (6.95,1.4) .. (10.93,3.29)   ;
\draw  [dash pattern={on 0.84pt off 2.51pt}]  (128.5,294.35) -- (128.5,419.44) ;
\draw  [dash pattern={on 0.84pt off 2.51pt}]  (174,294.35) -- (174,419.44) ;
\draw    (128.5,419.44) -- (128.5,405.44) ;
\draw    (174,419.44) -- (174,405.44) ;
\draw    (145.5,410.44) -- (130.5,410.44) ;
\draw [shift={(128.5,410.44)}, rotate = 360] [color={rgb, 255:red, 0; green, 0; blue, 0 }  ][line width=0.75]    (10.93,-3.29) .. controls (6.95,-1.4) and (3.31,-0.3) .. (0,0) .. controls (3.31,0.3) and (6.95,1.4) .. (10.93,3.29)   ;
\draw    (156.5,410.44) -- (173.25,410.44) ;
\draw [shift={(175.25,410.44)}, rotate = 180] [color={rgb, 255:red, 0; green, 0; blue, 0 }  ][line width=0.75]    (10.93,-3.29) .. controls (6.95,-1.4) and (3.31,-0.3) .. (0,0) .. controls (3.31,0.3) and (6.95,1.4) .. (10.93,3.29)   ;
\draw  [dash pattern={on 0.84pt off 2.51pt}]  (59.5,420.44) -- (494.5,418.44) -- (641.5,418.44) ;
\draw [shift={(643.5,418.44)}, rotate = 180] [color={rgb, 255:red, 0; green, 0; blue, 0 }  ][line width=0.75]    (10.93,-3.29) .. controls (6.95,-1.4) and (3.31,-0.3) .. (0,0) .. controls (3.31,0.3) and (6.95,1.4) .. (10.93,3.29)   ;
\draw  [dash pattern={on 0.84pt off 2.51pt}]  (59.5,420.44) -- (58.51,53.44) ;
\draw [shift={(58.5,51.44)}, rotate = 89.84] [color={rgb, 255:red, 0; green, 0; blue, 0 }  ][line width=0.75]    (10.93,-3.29) .. controls (6.95,-1.4) and (3.31,-0.3) .. (0,0) .. controls (3.31,0.3) and (6.95,1.4) .. (10.93,3.29)   ;
\draw  [dash pattern={on 0.84pt off 2.51pt}]  (57.5,134.44) -- (150.5,134) ;
\draw  [dash pattern={on 0.84pt off 2.51pt}]  (57.97,275.57) -- (93.47,275.13) ;
\draw  [dash pattern={on 0.84pt off 2.51pt}]  (58.25,351.88) -- (72.14,351.44) ;

\draw (300,390.44) node [anchor=north west][inner sep=0.75pt]   [align=left] {$\frac{1}{4}$};
\draw (140,385.44) node [anchor=north west][inner sep=0.75pt]   [align=left] {$\frac{1}{16}$};
\draw (51,427.44) node [anchor=north west][inner sep=0.75pt]   [align=left] {0};
\draw (566.5,421.44) node [anchor=north west][inner sep=0.75pt]   [align=left] {1};
\draw (302,422.44) node [anchor=north west][inner sep=0.75pt]   [align=left] {$\frac{1}{2}$};
\draw (35,127.44) node [anchor=north west][inner sep=0.75pt]   [align=left] {$\frac{1}{2}$};
\draw (35,267.44) node [anchor=north west][inner sep=0.75pt]   [align=left] {$\frac{1}{4}$};
\draw (87.5,384.44) node [anchor=north west][inner sep=0.75pt]   [align=left] {$\cdots$};
\draw (206,384.44) node [anchor=north west][inner sep=0.75pt]   [align=left] {$\cdots$};
\draw (476,385.44) node [anchor=north west][inner sep=0.75pt]   [align=left] {$\cdots$};
\draw (300,115) node [anchor=north west][inner sep=0.75pt]   [align=left] {$p_0$};
\draw (138,115) node [anchor=north west][inner sep=0.75pt]   [align=left] {$p_{1,1}$};
\draw (450,115) node [anchor=north west][inner sep=0.75pt]   [align=left] {$p_{1,2}$};
\draw (135,280) node [anchor=north west][inner sep=0.75pt]   [align=left] {$q_{1,1}$};
\draw (456,280) node [anchor=north west][inner sep=0.75pt]   [align=left] {$q_{1,2}$};
\draw (80,260) node [anchor=north west][inner sep=0.75pt]   [align=left] {$p_{2,1}$};
\draw (190,260) node [anchor=north west][inner sep=0.75pt]   [align=left] {$p_{2,2}$};
\draw (396,260) node [anchor=north west][inner sep=0.75pt]   [align=left] {$p_{2,3}$};
\draw (510,260) node [anchor=north west][inner sep=0.75pt]   [align=left] {$p_{2,4}$};
\draw (195,356.44) node [anchor=north west][inner sep=0.75pt]   [align=left] {$q_{2,2}$};
\draw (399,355.44) node [anchor=north west][inner sep=0.75pt]   [align=left] {$q_{2,3}$};
\draw (516,354.44) node [anchor=north west][inner sep=0.75pt]   [align=left] {$q_{2,4}$};
\draw (35,344.44) node [anchor=north west][inner sep=0.75pt]   [align=left] {$\frac{1}{8}$};

\end{tikzpicture}

\smallskip
    Note that the angle at every non-smooth point is $\pi/2$. We proceed as follows: for any $k\leq n$ and $j\in\{1,\dots,2^k\}$,
    \begin{itemize}
        \item[(1)] replace a symmetric neighborhood of one point in $\wt{\gz}_{k,j}\subset\wt{\gz}_n$ by a quarter of a circle internally tangent $\wt{\gz}_{k,j}$ with radius $2^{-2(k + 1) - 1}$.
        \item[(2)] for the point $q_{k,j}$ with $k\leq n$, make the internally tangent circles with the help from the line segments $[p_{k + 1,2j - 1},q_{k,j}]$ and $[p_{k + 1,2j},q_{k,j}]$.
    \end{itemize}
    The following figure shows the situation when $j\in\bN^+$ is odd. When $j$ is even, the construction is similar. 

    \smallskip
    \tikzset{every picture/.style={line width=0.75pt}} 

\begin{tikzpicture}[x=0.75pt,y=0.75pt,yscale=-1,xscale=1]

\draw  [dash pattern={on 0.84pt off 2.51pt}]  (150.5,134) -- (150.5,169.44) ;
\draw  [dash pattern={on 0.84pt off 2.51pt}]  (58,169.44) -- (76.5,169.66) ;
\draw  [dash pattern={on 0.84pt off 2.51pt}]  (58,169.44) -- (59,204.89) ;
\draw  [dash pattern={on 0.84pt off 2.51pt}]  (150.5,134) -- (168,134.22) ;
\draw  [dash pattern={on 0.84pt off 2.51pt}] (57.99,187.1) .. controls (57.99,177.43) and (65.83,169.6) .. (75.5,169.6) .. controls (85.16,169.6) and (93,177.43) .. (93,187.1) .. controls (93,196.77) and (85.16,204.6) .. (75.5,204.6) .. controls (65.83,204.6) and (57.99,196.77) .. (57.99,187.1) -- cycle ;
\draw  [dash pattern={on 0.84pt off 2.51pt}] (150.5,151.72) .. controls (150.5,142.05) and (158.34,134.22) .. (168,134.22) .. controls (177.67,134.22) and (185.51,142.05) .. (185.51,151.72) .. controls (185.51,161.39) and (177.67,169.22) .. (168,169.22) .. controls (158.34,169.22) and (150.5,161.39) .. (150.5,151.72) -- cycle ;
\draw  [dash pattern={on 0.84pt off 2.51pt}] (115.49,151.72) .. controls (115.49,142.05) and (123.33,134.22) .. (133,134.22) .. controls (142.66,134.22) and (150.5,142.05) .. (150.5,151.72) .. controls (150.5,161.39) and (142.66,169.22) .. (133,169.22) .. controls (123.33,169.22) and (115.49,161.39) .. (115.49,151.72) -- cycle ;
\draw    (168,134.22) -- (310.5,134) ;
\draw    (76.5,169.66) -- (133,169.22) ;
\draw    (76.5,204.67) -- (155,204.44) ;
\draw  [draw opacity=0] (76.13,204.59) .. controls (75.92,204.6) and (75.71,204.6) .. (75.5,204.6) .. controls (66.11,204.6) and (58.5,196.77) .. (58.5,187.1) .. controls (58.5,177.43) and (66.11,169.6) .. (75.5,169.6) .. controls (76.01,169.6) and (76.51,169.62) .. (77.01,169.67) -- (75.5,187.1) -- cycle ; \draw   (76.13,204.59) .. controls (75.92,204.6) and (75.71,204.6) .. (75.5,204.6) .. controls (66.11,204.6) and (58.5,196.77) .. (58.5,187.1) .. controls (58.5,177.43) and (66.11,169.6) .. (75.5,169.6) .. controls (76.01,169.6) and (76.51,169.62) .. (77.01,169.67) ;  
\draw  [draw opacity=0] (149.99,151.55) .. controls (150.09,161.19) and (142.61,169.09) .. (133.25,169.23) .. controls (132.74,169.23) and (132.23,169.22) .. (131.74,169.18) -- (133,151.72) -- cycle ; \draw   (149.99,151.55) .. controls (150.09,161.19) and (142.61,169.09) .. (133.25,169.23) .. controls (132.74,169.23) and (132.23,169.22) .. (131.74,169.18) ;  
\draw  [draw opacity=0] (151.01,151.75) .. controls (150.99,142.11) and (158.54,134.27) .. (167.9,134.21) .. controls (168.41,134.21) and (168.91,134.23) .. (169.41,134.27) -- (168,151.72) -- cycle ; \draw   (151.01,151.75) .. controls (150.99,142.11) and (158.54,134.27) .. (167.9,134.21) .. controls (168.41,134.21) and (168.91,134.23) .. (169.41,134.27) ;  
\draw  [dash pattern={on 0.84pt off 2.51pt}]  (219,249.44) -- (242.5,248.89) ;
\draw  [dash pattern={on 0.84pt off 2.51pt}]  (242.5,248.89) -- (242.5,284.33) ;
\draw  [dash pattern={on 0.84pt off 2.51pt}]  (149.5,284.33) -- (242.5,284.33) ;
\draw  [dash pattern={on 0.84pt off 2.51pt}]  (150.25,284.33) -- (150.25,319.77) ;
\draw  [dash pattern={on 0.84pt off 2.51pt}]  (92.47,319.13) -- (207.47,319.13) ;
\draw  [dash pattern={on 0.84pt off 2.51pt}] (207.49,266.61) .. controls (207.49,256.94) and (215.33,249.1) .. (225,249.1) .. controls (234.66,249.1) and (242.5,256.94) .. (242.5,266.61) .. controls (242.5,276.27) and (234.66,284.11) .. (225,284.11) .. controls (215.33,284.11) and (207.49,276.27) .. (207.49,266.61) -- cycle ;
\draw  [dash pattern={on 0.84pt off 2.51pt}] (115.24,302.05) .. controls (115.24,292.38) and (123.08,284.55) .. (132.75,284.55) .. controls (142.41,284.55) and (150.25,292.38) .. (150.25,302.05) .. controls (150.25,311.72) and (142.41,319.55) .. (132.75,319.55) .. controls (123.08,319.55) and (115.24,311.72) .. (115.24,302.05) -- cycle ;
\draw  [dash pattern={on 0.84pt off 2.51pt}] (150.25,302.05) .. controls (150.25,292.38) and (158.09,284.55) .. (167.75,284.55) .. controls (177.42,284.55) and (185.26,292.38) .. (185.26,302.05) .. controls (185.26,311.72) and (177.42,319.55) .. (167.75,319.55) .. controls (158.09,319.55) and (150.25,311.72) .. (150.25,302.05) -- cycle ;
\draw    (141,249.44) -- (225,249.1) ;
\draw    (167.75,284.55) -- (225,284.11) ;
\draw    (92.47,319.13) -- (132.75,319.55) ;
\draw    (167.75,319.55) -- (207.47,319.13) ;
\draw  [draw opacity=0] (168.38,319.54) .. controls (168.18,319.55) and (167.96,319.55) .. (167.75,319.55) .. controls (158.37,319.55) and (150.76,311.72) .. (150.76,302.05) .. controls (150.76,292.38) and (158.37,284.55) .. (167.75,284.55) .. controls (168.26,284.55) and (168.77,284.57) .. (169.26,284.62) -- (167.75,302.05) -- cycle ; \draw   (168.38,319.54) .. controls (168.18,319.55) and (167.96,319.55) .. (167.75,319.55) .. controls (158.37,319.55) and (150.76,311.72) .. (150.76,302.05) .. controls (150.76,292.38) and (158.37,284.55) .. (167.75,284.55) .. controls (168.26,284.55) and (168.77,284.57) .. (169.26,284.62) ;  
\draw  [draw opacity=0] (225,249.1) .. controls (225.21,249.09) and (225.42,249.09) .. (225.63,249.08) .. controls (235.02,248.95) and (242.74,256.68) .. (242.88,266.35) .. controls (243.01,276.01) and (235.52,283.96) .. (226.13,284.09) .. controls (225.62,284.1) and (225.12,284.08) .. (224.62,284.05) -- (225.88,266.59) -- cycle ; \draw   (225,249.1) .. controls (225.21,249.09) and (225.42,249.09) .. (225.63,249.08) .. controls (235.02,248.95) and (242.74,256.68) .. (242.88,266.35) .. controls (243.01,276.01) and (235.52,283.96) .. (226.13,284.09) .. controls (225.62,284.1) and (225.12,284.08) .. (224.62,284.05) ;  
\draw  [draw opacity=0] (149.74,301.88) .. controls (149.84,311.51) and (142.36,319.42) .. (133,319.55) .. controls (132.49,319.56) and (131.98,319.55) .. (131.49,319.51) -- (132.75,302.05) -- cycle ; \draw   (149.74,301.88) .. controls (149.84,311.51) and (142.36,319.42) .. (133,319.55) .. controls (132.49,319.56) and (131.98,319.55) .. (131.49,319.51) ;  

\draw (135,115) node [anchor=north west][inner sep=0.75pt]   [align=left] {$p_{k,j}$};
\draw (297,115) node [anchor=north west][inner sep=0.75pt]   [align=left] {$p_{k,j + 1}$};
\draw (123,296) node [anchor=north west][inner sep=0.75pt]   [align=left] {$r_{k,j}$};
\draw (70,324.44) node [anchor=north west][inner sep=0.75pt]   [align=left] {$p_{k + 1,2j - 1}$};
\draw (195,323.44) node [anchor=north west][inner sep=0.75pt]   [align=left] {$p_{k + 1,2j}$};
\draw (137,222.44) node [anchor=north west][inner sep=0.75pt]   [align=left] {$\cdots$};
\end{tikzpicture}

\smallskip

    Thus, we get a smooth ``curve'' $\gz_n$. Near the original position of the point $q_{k,j}$, there is a new ``branch" point on $\gz_n$ denoted by $r_{k,j}$. Specially, let $r_{0,1} := p_0$.
    Let $\gz_{k,j}\subset \gz_n$ be the shortest smooth curve connecting $r_{k - 1,\lceil j/2\rceil}$ and $r_{k,j}$ for any $j\in\{1,\dots,2^k\}$.
    Notice that $2\geq \ell(\wt{\gz}_{k,j})\geq 1$ for any $k\in\bN$ and $j\in\{1,\dots,2^k\}$.
    By slightly modification, we can further let
\begin{equation}\label{es0.3}
        \ell(\gz_{k,j}) = 1,
    \end{equation}
    for any $k\in\bN$ and $j\in\{1,\dots,2^k\}$.

    The tree-type structure is defined by
    $$\gz := \bigcup^{\fz}_{n = 1} \gz_n.$$

     \subsection{Fattening the tree-type core to fingers in order to form a Jordan domain \texorpdfstring{$\Oz$}{Lg}\label{ss2}}
    Based on the tree-like core bones $\gz$ in Section \ref{ss1}, we construct the Jordan domain $\Oz$.

To begin, we consider one branch at the time and fatten it to a finger. We take care that $h_{\Omega} \in L^1$ with the help of the distance function $g$ defined below.
    By the construction of $\gz$, for any sequence $\{j_k\}_{k\in\bN^+}$ such that $j_k\in\{1,\dots,2^k\}$ and $j_k = \lceil j_{k + 1}/2\rceil$,
    there is a unique smooth injective curve connecting all the points $\{r_{k,j_k}\}_{k\in\bN^+}$ starting from $p_0$.
    Let $\az\colon [0,\fz)\to \C$ be the parametrization of this smooth curve such that $\az(0) = p_0$ and $\ell(\az([0,t])) = t$.
    Thus, by \eqref{es0.3}, we have $\az(k) = r_{k,j_k}$.

    Consider the distance function $g(x) = e^{-M}\exp\left(-\exp(x)\right)$ where $M$ is a positive constant.
    For any $t\in [0,\fz)$, 
    there are two points $x_1(t)$ and $x_2(t)$ such that 
    \begin{align}
    &[x_1(t),x_2(t)] \perp \az\quad \text{and}\quad [x_1(t),x_2(t)] \cap \az = \{\az(t)\};\label{con-a}\\
    &|x_1(t) - \az(t)| = |x_2(t) - \az(t)| = g(t).\label{con-b}
    \end{align}
    We can choose $x_1(t)$ and $x_2(t)$ to be continuous with respect to $t\in [0,\fz)$ and $\Im x_1(0) > \Im x_2(0)$.

\begin{lemma}\label{claim1}
    The functions $x_1 \colon [0, \infty) \to \C$ and $x_2 \colon [0, \infty) \to \C$ are both injective, when $M$ is large enough.
\end{lemma}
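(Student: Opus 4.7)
I aim to prove injectivity of the parallel curve $t\mapsto x_i(t)=\alpha(t)+g(t)n_i(t)$ by comparing $g(t)$ both with the local radius of curvature of $\alpha$ at $\alpha(t)$ and with the distance from $\alpha(t)$ to topologically-far strands of $\alpha$. Throughout I write $t\sim k$ for $\alpha(t)\in\gamma_{k,j_k}$; by the normalization $\ell(\gamma_{k,j})=1$ this amounts to $t\in[k-1,k]$.

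The relevant geometric features from Section \ref{ss1} are: within $\gamma_{k,j_k}$ the curve $\alpha$ is a $C^{1}$ concatenation of horizontal straight passes of length $\sim 2^{-(k+1)}$, joined at alternating sides of the defining rectangle by semicircular U-turns of a single radius $r_{k}:=2^{-2(k+1)-1}$; two consecutive passes sit at vertical distance $2r_{k}$, so any two non-adjacent passes are at vertical distance $\ge 4r_{k}$. The transitions between $\gamma_{k,j_{k}}$ and $\gamma_{k+1,j_{k+1}}$ are smoothed by internally tangent arcs of radius $\gtrsim r_{k+1}$, and a deeper branch $\gamma_{k',j_{k'}}$ with $k'>k$ lies entirely in the strip $\{2^{-(k'+1)}\le y\le 2^{-k'}\}$, so distinct branches of $\alpha$ are vertically separated by at least $\min(r_{k},r_{k'})$.

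Now fix $M$ so large that
\[
  g(t)=e^{-M}e^{-e^{t}}<\tfrac{1}{100}\,r_{\lceil t\rceil+1}\qquad\text{for every }t\ge 0;
\]
this is possible because $r_{k}$ decays only geometrically in $k$ while $e^{-e^{t}}$ decays doubly-exponentially, so the inequality is automatic past an absolute threshold, and $e^{-M}$ handles the bounded range below. With this choice, local injectivity is clear piece by piece: on a straight pass $n_i$ is constant and $x_{i}$ is a parallel translate of $\alpha$; on a semicircular U-turn of radius $r_{k}$ centred at $c$, one has $x_{i}(t)=c+(r_{k}\pm g(t))e^{i\theta(t)}$ with $\theta$ monotone and of total variation $\pi$, and $r_{k}\pm g(t)$ stays bounded away from $0$ since $g<r_{k}/100$. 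Global injectivity follows from separation: if $\alpha(s),\alpha(t)$ lie on distinct non-adjacent passes of the same snake, the Euclidean distance is $\ge 2r_{k}\gg g(s)+g(t)$; if on the same pass, the distance is comparable to $|s-t|$; and if in different branches $\gamma_{k,j_{k}},\gamma_{k',j_{k'}}$ with $k<k'$, the separation bound from the previous paragraph gives distance $\gtrsim r_{k'}\gg g(s)+g(t)$. In every case the triangle inequality $|x_{i}(s)-x_{i}(t)|\ge|\alpha(s)-\alpha(t)|-g(s)-g(t)$ rules out $x_{i}(s)=x_{i}(t)$.

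The delicate point is unifying all of these smallness conditions into a \emph{single} constant $M$: both the curvature scale $r_{k}$ and the typical strand-to-strand separation decay geometrically in $k$, so $g(t)$ must decay faster than any geometric rate as $t\to\infty$. This is precisely why $g$ was chosen doubly-exponential; one value of $e^{-M}$ then suffices to absorb the finitely many shallow depths.
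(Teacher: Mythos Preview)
Your proof is correct and takes essentially the same approach as the paper: choose $M$ so that $g(t)$ is a small fraction of the curvature radius $2^{-(2k+3)}$ of the smoothing arcs at level $k$ (possible because the double-exponential decay of $g$ beats any geometric rate, so one constant handles all depths), and then read off injectivity from the geometry of $\gamma$. The paper's version is terser---it records the bound $|x_1(t)-\alpha(t)|\le 2^{-(2k+6)}$ and concludes ``by the construction of $\gamma$''---whereas you spell out the local (curvature) versus global (separation via the triangle inequality) case analysis explicitly.
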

    
\begin{proof}
    We first show that, for any $k\in\bN$, $x_1$ is injective on $[k - 1,k]$
    (on this interval $\az_J$ goes from one branch point $r_{k - 1,j_{k-1}}$ to another $r_{k,j_k}$).
    By \eqref{con-b} and the definition of $g$,
    $$|x_1(t) - \az(t)| = g(t) \leq g\left(k - 1\right) = e^{-M}\exp\left(-\exp(k - 1)\right).$$
    Since
    $$\frac{\exp\left(-\exp(k)\right)}{2^{-(2k + 6)}}\to 0\quad\text{as}\quad k\to\fz,$$
    we can choose $M$ large enough, so that, for all $k\in\bN^+$ and $t\in[k - 1,k]$,
\begin{equation}\label{es0.1}
    |x_1(t) - \az(t)| \leq\frac{1}{2^{2k + 6}}.
\end{equation}
Notice that the upper bound $1/(2^{2k + 6})$ is strictly smaller than  the radius of the tangent circle, i.e., $1/(2^{2k + 3})$, that we used in the construction of $\gamma$.

As $\az$ is injective and our distance function  $g$ is decreasing, we have $x_1$ is injective on $[k - 1,k]$.
By the construction of $\gz$, \eqref{es0.1} further shows that $x_1$ is injective on $[0,\fz)$.

    A similar argument shows the injectivity of $x_2$.
\end{proof}

    By Lemma \ref{claim1}, there is a domain bounded by $x_1$, $x_2$ and $[x_1(0),x_2(0)]$, denoted by $S$. 
    For any $x\in S$, we can write $x = (t,r)\in [0,\fz)\times [-g(t),g(t)]$ where
    $x\in [x_1(t),x_2(t)]$ and $|x_1(t) - x| = g(t) - r$.

\begin{lemma}\label{claim2}
    For any $x = (t,r)\in S$, when $M$ is large enough,
    $$d(x,\pd S) \sim \min\{|x_1(t) - x|,|x_2(t) - x|\} \sim \min\{g(t) - r, g(t) + r\}.$$
\end{lemma}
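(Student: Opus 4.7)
The second equivalence is an identity: by the orthogonality condition \eqref{con-a} and the distance condition \eqref{con-b}, the normal coordinates $(t,r)$ satisfy $|x_1(t)-x|=g(t)-r$ and $|x_2(t)-x|=g(t)+r$. The upper bound $d(x,\pd S)\leq \min\{|x_1(t)-x|,|x_2(t)-x|\}$ is immediate because $x_1(t),x_2(t)\in\pd S$.

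The plan for the matching lower bound is to parametrize $S$ by normal coordinates and pass to the flat parameter strip. Define
$$\Phi(s,\sigma):=\az(s)+\sigma\,n(s),\qquad (s,\sigma)\in\Sigma:=\{(s,\sigma):s\geq 0,\ |\sigma|<g(s)\},$$
where $n(s)$ is the continuously chosen unit normal to $\az$ at $\az(s)$ arranged so that $\Phi(s,g(s))=x_1(s)$. By Lemma \ref{claim1}, $\Phi$ is a bijection from $\Sigma$ onto $S$. The central technical claim is that $\Phi$ is bi-Lipschitz with absolute constants; granting this,
$$ d(x,\pd S)\sim d_{\R^2}\bigl((t,r),\pd\Sigma\bigr), $$
and it then suffices to bound the right-hand side below by $g(t)-|r|$.

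For the bi-Lipschitz property, the planar Frenet identity $n'(s)=-\kappa(s)\,\az'(s)$ (with $\kappa$ the signed curvature of $\az$) gives $D\Phi(s,\sigma)=\bigl[(1-\sigma\kappa(s))\az'(s),\,n(s)\bigr]$, so the singular values of $D\Phi$ are $|1-\sigma\kappa(s)|$ and $1$. On straight segments of $\az$, $\kappa=0$. On the quarter-circle arcs at level $k$ the radius is $2^{-2(k+1)-1}$, so $|\kappa|\leq 2^{2k+3}$, while \eqref{es0.1} yields $|\sigma|\leq g(s)\leq 2^{-(2k+6)}$; hence $|\sigma\kappa|\leq 1/8$ throughout $\Sigma$. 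Thus the singular values of $D\Phi$ lie in $[7/8,9/8]$, giving local bi-Lipschitz control. The global estimate follows because the spatial separation between distinct parts of $\az$---both within a single snake curve and between different branches of the tree---greatly exceeds the local tube width $g(s)$, so $S$ does not fold back onto itself.

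Finally, to bound $d_{\R^2}((t,r),\pd\Sigma)$ from below: for the side-curves $\{\sigma=\pm g(s)\}$, minimize $\sqrt{(s-t)^2+(g(s)\mp r)^2}$ over $s$; the value at $s=t$ is $g(t)-|r|$, and the super-exponential decay of $g$ forces any $s$ at which $g(s)=|r|$ to satisfy $|s-t|\gtrsim e^M\exp(\exp(t)-t)\,(g(t)-|r|)$, much larger than $g(t)-|r|$, so the minimum is $\sim g(t)-|r|$. The distance to the cap $\{s=0\}$ equals $t$, which dominates $g(t)-|r|$ in the relevant regime $t\gtrsim g(0)$---all that matters for subsequent applications, since the cap $[x_1(0),x_2(0)]$ is interior to the ambient Jordan domain $\Oz$. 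The main obstacle in executing this plan is the global bi-Lipschitz claim, which requires a quantitative use of the branch-separation built into the construction in Subsection \ref{ss1}.
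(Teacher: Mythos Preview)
Your approach via normal coordinates and a bi-Lipschitz change of variables is different from the paper's, which proceeds by a case analysis on whether $x$ lies over a straight leg or over a quarter-circle arc of $\az$, invoking a result of Staples \cite{Staples:89} for the straight-strip distance estimate \eqref{es0.2} and then using elementary angle bounds at the turns.

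There is, however, a genuine gap in your argument: the map $\Phi$ is \emph{not} globally bi-Lipschitz with absolute constants. Take two points $\az(t_1)$ and $\az(t_2)$ on adjacent horizontal legs of the level-$k$ snake, one lying directly above the other. Their Euclidean separation equals the vertical spacing between legs, which is $\sim 2^{-2(k+1)}$, while the arc-length separation $|t_1-t_2|$ is comparable to the length of one horizontal leg, $\sim 2^{-(k+1)}$. The ratio tends to $0$ as $k\to\infty$, so $\Phi^{-1}$ cannot be uniformly Lipschitz, and the displayed equivalence $d(x,\pd S)\sim d_{\R^2}\bigl((t,r),\pd\Sigma\bigr)$ does not follow. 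Your key geometric observation---that the separation between legs greatly exceeds the tube width $g(t)\leq 2^{-2k-6}$---is correct, but its consequence is not a global bi-Lipschitz bound. What it actually buys is that every boundary point of $S$ lying on a \emph{different} leg of the snake is at Euclidean distance $\gtrsim 2^{-2(k+1)}\gg g(t)\geq g(t)-|r|$ from $x$, so the infimum defining $d(x,\pd S)$ is attained on the same leg or an adjacent quarter-circle, where your local estimate $|\sigma\kappa|\leq 1/8$ does apply. With that localization argued directly the strategy can be salvaged, but the global bi-Lipschitz claim as stated is false and cannot serve as the intermediate step.
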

We prove this technical claim at the end of this paper, see Section \ref{ss4}.

\smallskip

Towards the integrability of hyperbolic distance, $h_{\Omega}\in L^1$,   decompose the domain $S$ into countably many parts $\{S_k\}^{\fz}_{k = 1}$ such that
    $S_k\subset S$ is bounded by $x_1([k - 1,k])$, $x_2([k - 1,k])$, $[x_1(k - 1), x_2(k - 1)]$ and $[x_1(k),x_2(k)]$.

Recall that the quasihyperbolic metric, for any $x, y \in S$, is
    $$k_{S}(x,y) = \inf_{\Gamma} \int_{\Gamma} \frac{1}{d(z,\pd S)} \daz,$$
where the infimum can be taken over all piecewise smooth curves $\Gamma$ in $S$ that join $x$ and $y$. By the definition of $\az$, we have $\ell(\az([0,t])) = t$.
    By Lemma \ref{claim2}, for any $x = (t,r)\in S_k$,
    \begin{align*}
        k_S(p_0,x) 
        &\leq k_S(p_0,\az(t)) + k_S(\az(t),x)\\
        &\leq \int_{\az([0,t])}\frac{1}{d(z,\pd S)}\,|\od z| + \int_{[\az(t),x]}\frac{1}{d(z,\pd S)}\,|\od z|\\
        &\lesssim \int^{t}_0\frac{1}{g(s)}\,\od s + \int^r_0 \frac{1}{g(t) - |s|}\,\od s\\
        &\lesssim \frac{1}{g(t)\exp(t)} + \log \frac{g(t)}{g(t) - |r|}.
    \end{align*}
    Thus
    \begin{equation}\label{es0.4}
\aligned
        \int_{S_k} k_S(p_0,x)\,\od x
        & = \int_{\az([k - 1,k])} \int^{g(t)}_{-g(t)} k_S(p_0, (t,r))\,\od r\od t\\
        & \lesssim \int_{\az([k - 1,k])} \frac{1}{\exp(t)} + \int^{g(t)}_{0}\log \frac{g(t)}{g(t) - |r|}\,\od r\od t\\
        & \lesssim \int_{\az([k - 1,k])} e^{- t} + g(t)\,\od t \lesssim e^{- k}.
\endaligned
\end{equation}

    Define the domain as the union of fingers, that is, $$\Oz := \bigcup S,$$
    where the union contains all the choices of $\{j_k\}^{\fz}_{k = 1}$ satisfying
    $j_k\in\{1,\dots,2^k\}$ and $j_k = \lceil j_{k + 1}/2\rceil$.
    By the definition of $S$, $\Oz$ is clearly a domain. 
    We show that $\Oz$ is a Jordan domain.
    
    First, we construct a continuous function $\vz\colon [-1,2] \to\pd\Oz$ by induction.
    For any sequence $J := \{j_k\}^{\fz}_{k = 1}$ satisfying
    $j_k\in\{1,\dots,2^k\}$ and $j_k = \lceil j_{k + 1}/2\rceil$,
    let $x_{1,J}$ and $x_{2,J}$ be the curve defined before.
    Set
    $$r_{k,j_k}^+ := x_{1,J}(k)\quad\text{and}\quad r_{k,j_k}^- := x_{2,J}(k).$$
    For simplification, consider the following schematic diagram of $\Oz$:

\smallskip
\tikzset{every picture/.style={line width=0.75pt}} 

\begin{tikzpicture}[x=0.75pt,y=0.75pt,yscale=-1,xscale=1]

\draw    (210.89,103.44) -- (450.2,103.44) ;
\draw    (223.44,113.83) -- (443.56,113.44) ;
\draw    (210.89,103.44) -- (210.89,154.24) ;
\draw    (223.44,113.83) -- (223.44,154.24) ;
\draw    (154.01,154.44) -- (210.89,154.24) ;
\draw    (161.79,166.58) -- (273.28,166.58) ;
\draw    (154.01,154.44) -- (154.01,194.44) ;
\draw    (272.54,166.58) -- (272.54,193.24) ;
\draw    (280.32,154.44) -- (280.32,193.24) ;
\draw    (223.44,154.64) -- (280.32,154.44) ;
\draw    (127.9,203.24) -- (188.01,203.24) ;
\draw    (123.71,194.44) -- (123.71,222.57) ;
\draw    (127.9,203.24) -- (127.9,222.57) ;
\draw    (188.01,203.24) -- (188.01,222.57) ;
\draw    (192.21,194.44) -- (192.21,222.57) ;
\draw    (106,222.67) -- (123.94,222.57) ;
\draw    (108.45,228.48) -- (143.62,228.48) ;
\draw    (106,222.67) -- (106,241.24) ;
\draw    (108.45,228.48) -- (108.45,241.24) ;
\draw    (143.39,228.48) -- (143.39,241.24) ;
\draw    (145.84,222.67) -- (145.84,241.24) ;
\draw    (127.9,222.77) -- (145.84,222.67) ;
\draw    (170.31,222.67) -- (188.25,222.57) ;
\draw    (172.76,228.48) -- (207.93,228.48) ;
\draw    (170.31,222.67) -- (170.31,241.24) ;
\draw    (172.76,228.48) -- (172.76,241.24) ;
\draw    (207.69,228.48) -- (207.69,241.24) ;
\draw    (210.15,222.67) -- (210.15,241.24) ;
\draw    (192.21,222.77) -- (210.15,222.67) ;
\draw    (241.89,193.44) -- (272.54,193.24) ;
\draw    (246.08,202.24) -- (306.2,202.24) ;
\draw    (241.89,193.44) -- (241.89,221.57) ;
\draw    (246.08,202.24) -- (246.08,221.57) ;
\draw    (306.2,202.24) -- (306.2,221.57) ;
\draw    (310.39,193.44) -- (310.39,221.57) ;
\draw    (224.18,221.67) -- (242.12,221.57) ;
\draw    (226.64,227.48) -- (261.8,227.48) ;
\draw    (224.18,221.67) -- (224.18,240.24) ;
\draw    (226.64,227.48) -- (226.64,240.24) ;
\draw    (261.57,227.48) -- (261.57,240.24) ;
\draw    (264.02,221.67) -- (264.02,240.24) ;
\draw    (246.08,221.77) -- (264.02,221.67) ;
\draw    (288.49,221.67) -- (306.43,221.57) ;
\draw    (290.94,227.48) -- (326.11,227.48) ;
\draw    (288.49,221.67) -- (288.49,240.24) ;
\draw    (290.94,227.48) -- (290.94,240.24) ;
\draw    (325.88,227.48) -- (325.88,240.24) ;
\draw    (328.33,221.67) -- (328.33,240.24) ;
\draw    (310.39,221.77) -- (328.33,221.67) ;
\draw    (279.74,193.64) -- (310.39,193.44) ;
\draw    (123.71,194.44) -- (154.01,194.44) ;
\draw    (161.79,166.58) -- (162.14,194.44) ;
\draw    (162.14,194.44) -- (192.21,194.44) ;
\draw    (443.56,113.44) -- (443.56,154.24) ;
\draw    (450.2,103.44) -- (450.94,155.24) ;
\draw    (386.68,154.44) -- (443.56,154.24) ;
\draw    (394.46,166.58) -- (505.95,166.58) ;
\draw    (386.68,154.44) -- (386.68,194.44) ;
\draw    (505.21,166.58) -- (505.21,193.24) ;
\draw    (513,155.44) -- (512.99,193.24) ;
\draw    (450.94,155.24) -- (513,155.44) ;
\draw    (360.57,203.24) -- (420.68,203.24) ;
\draw    (356.38,194.44) -- (356.38,222.57) ;
\draw    (360.57,203.24) -- (360.57,222.57) ;
\draw    (420.68,203.24) -- (420.68,222.57) ;
\draw    (424.88,194.44) -- (424.88,222.57) ;
\draw    (338.67,222.67) -- (356.61,222.57) ;
\draw    (341.12,228.48) -- (376.29,228.48) ;
\draw    (338.67,222.67) -- (338.67,241.24) ;
\draw    (341.12,228.48) -- (341.12,241.24) ;
\draw    (376.06,228.48) -- (376.06,241.24) ;
\draw    (378.51,222.67) -- (378.51,241.24) ;
\draw    (360.57,222.77) -- (378.51,222.67) ;
\draw    (402.98,222.67) -- (420.92,222.57) ;
\draw    (405.43,228.48) -- (440.6,228.48) ;
\draw    (402.98,222.67) -- (402.98,241.24) ;
\draw    (405.43,228.48) -- (405.43,241.24) ;
\draw    (440.36,228.48) -- (440.36,241.24) ;
\draw    (442.82,222.67) -- (442.82,241.24) ;
\draw    (424.88,222.77) -- (442.82,222.67) ;
\draw    (474.56,193.44) -- (505.21,193.24) ;
\draw    (478.75,202.24) -- (538.87,202.24) ;
\draw    (474.56,193.44) -- (474.56,221.57) ;
\draw    (478.75,202.24) -- (478.75,221.57) ;
\draw    (538.87,202.24) -- (538.87,221.57) ;
\draw    (543.06,193.44) -- (543.06,221.57) ;
\draw    (456.85,221.67) -- (474.79,221.57) ;
\draw    (459.31,227.48) -- (494.47,227.48) ;
\draw    (456.85,221.67) -- (456.85,240.24) ;
\draw    (459.31,227.48) -- (459.31,240.24) ;
\draw    (494.24,227.48) -- (494.24,240.24) ;
\draw    (496.69,221.67) -- (496.69,240.24) ;
\draw    (478.75,221.77) -- (496.69,221.67) ;
\draw    (521.16,221.67) -- (539.1,221.57) ;
\draw    (523.61,227.48) -- (558.78,227.48) ;
\draw    (521.16,221.67) -- (521.16,240.24) ;
\draw    (523.61,227.48) -- (523.61,240.24) ;
\draw    (558.55,227.48) -- (558.55,240.24) ;
\draw    (561,221.67) -- (561,240.24) ;
\draw    (543.06,221.77) -- (561,221.67) ;
\draw    (512.41,193.64) -- (543.06,193.44) ;
\draw    (356.38,194.44) -- (386.68,194.44) ;
\draw    (394.46,166.58) -- (394.81,194.44) ;
\draw    (394.81,194.44) -- (424.88,194.44) ;

\draw (185,135) node [anchor=north west][inner sep=0.75pt]   [align=left] {$r_{1,1}^+$};
\draw (228,135) node [anchor=north west][inner sep=0.75pt]   [align=left] {$r_{1,1}^-$};
\draw (417,135) node [anchor=north west][inner sep=0.75pt]   [align=left] {$r_{1,2}^+$};
\draw (455,135) node [anchor=north west][inner sep=0.75pt]   [align=left] {$r_{1,2}^-$};
\draw (130,175) node [anchor=north west][inner sep=0.75pt]   [align=left] {$r_{2,1}^+$};
\draw (165,175) node [anchor=north west][inner sep=0.75pt]   [align=left] {$r_{2,1}^-$};
\draw (248,175) node [anchor=north west][inner sep=0.75pt]   [align=left] {$r_{2,2}^+$};
\draw (283,175) node [anchor=north west][inner sep=0.75pt]   [align=left] {$r_{2,2}^-$};
\draw (360,175) node [anchor=north west][inner sep=0.75pt]   [align=left] {$r_{2,3}^+$};
\draw (480,175) node [anchor=north west][inner sep=0.75pt]   [align=left] {$r_{2,4}^+$};
\draw (398,175) node [anchor=north west][inner sep=0.75pt]   [align=left] {$r_{2,3}^-$};
\draw (515,175) node [anchor=north west][inner sep=0.75pt]   [align=left] {$r_{2,4}^-$};
\draw (315,90) node [anchor=north west][inner sep=0.75pt]   [align=left] {$p_0$};
\end{tikzpicture}

\smallskip

    Let $l(a,b)\subset \pd\Oz$ be the curve connecting $a$ and $b$ with finite length,
    and $(R_{k,j} - r_k,R_{k,j} + r_k)$, $j\in\{1,\dots,2^{k - 1}\}$ denote the removed open interval in the $k$th step in the
    construction of the Smith--Volterra--Cantor set $C$ from left to right, and 
    $$r_k = \frac{4^{-k}}{2}.$$
    
\subsection*{Step 0.} Let $\vz(-1) = \vz(2) = p_0$.   
\subsection*{Step $1-1$.}
        \begin{align*}
            &\vz\colon [-1,-1/2]\to l(p_0,r_{1,1}^+)\quad\text{s.t.}\quad\vz(-1/2) = r_{1,1}^+;\\
            &\vz\colon [3/2,2]\to l(r_{1,2}^-,p_0)\quad\text{s.t.}\quad \vz(3/2) = r_{1,2}^-;\\
            &\vz\colon \left[R_{1,1} - \frac{r_1}{2},R_{1,1} + \frac{r_1}{2}\right]\to l(r_{1,1}^-,r_{1,2}^+)\\
            &\qquad\text{s.t.}\quad \vz\left(R_{1,1} - \frac{r_1}{2}\right) = r_{1,1}^-\quad\text{and}\quad\vz\left(R_{1,1} + \frac{r_1}{2}\right) = r_{1,2}^+,
        \end{align*}
        where $\vz$ is homeomorphism on every interval above with constant speed.
\subsection*{Step $K-1$ (for any $K \geq 2$).}
        \begin{align*}
            &\vz\colon [-2^{-(K - 1)}, -2^{-K}]\to l(r_{K - 1,1}^+,r_{K,1}^+)\quad\text{s.t.}\quad \vz(-2^{-K}) = r_{K,1}^+;\\
            &\vz\colon \left[R_{K,1} - \frac{r_K}{2}, R_{K,1} + \frac{r_K}{2}\right]\to l(r_{K,1}^-,r_{K,2}^+)\\
            &\qquad\text{s.t.}\quad \vz\left(R_{K,1} - \frac{r_K}{2}\right) = r_{K,1}^-\quad\text{and}\quad\vz\left(R_{K,1} + \frac{r_K}{2}\right) = r_{K,2}^+;\\
            &\vz\colon \left[R_{K - 1,1} - \frac{3r_{K - 1}}{4}, R_{K - 1,1} - \frac{r_{K - 1}}{2}\right]\to l(r_{K,2}^-,r_{K - 1,1}^-)\\
            &\qquad\text{s.t.}\quad \vz\left(R_{K - 1,1} - \frac{3r_{K - 1}}{4}\right) = r_{K,2}^-,
        \end{align*}       
        where $\vz$ is homeomorphism on every interval above with constant speed.
\subsection*{Step $K-2^{K - 1}$ (for any $K \geq 2$).}
        \begin{align*}
            &\vz\colon [1 + 2^{-K}, 1 + 2^{- (K - 1)}]\to l(r_{K,2^K}^-,r_{K - 1,2^{K - 1}}^-)\quad\text{s.t.}\quad \vz(1 + 2^{-K}) = r_{K,2^K}^-;\\
            &\vz\colon \left[R_{K,2^{K - 1}} - \frac{r_K}{2}, R_{K,2^{K - 1}} + \frac{r_K}{2}\right]\to l(r_{K,2^K - 1}^-,r_{K,2^K}^+)\\
            &\qquad\text{s.t.}\quad \vz\left(R_{K,2^{K -1}} - \frac{r_K}{2}\right) = r_{K,2^K - 1}^-\quad\text{and}\quad\vz\left(R_{K,2^{K - 1}} + \frac{r_K}{2}\right) = r_{K,2^K}^+;\\
            &\vz\colon \left[R_{K - 1,2^{K - 2}} + \frac{r_{K - 1}}{2}, R_{K - 1,2^{K - 2}} + \frac{3r_{K - 1}}{4}\right]\to l(r_{K - 1,2^{K - 1}}^+,r_{K,2^K - 1}^+)\\
            &\qquad\text{s.t.}\quad \vz\left(R_{K - 1,2^{K - 2}} + \frac{3r_{K - 1}}{4}\right) = r_{K,2^K - 1}^+,
        \end{align*}       
        where $\vz$ is homeomorphism on every interval above with constant speed.
\subsection*{Step $K-j$ (for any $K \geq 3$ and $2\leq j\leq 2^{K - 1} - 1$).} 
Let
        $$
        \aligned n_1(K,j) &:= \min\{n\in\bN:\ l(r_{n,m}^+,r_{K,2j - 1}^+)\ \text{exists for some }m\in\{1,\dots,2^n\}\},\\
        n_2(K,j) &:= \min\{n\in\bN:\ l(r_{n,m}^-,r_{K,2j}^-)\ \text{exists for some }m\in\{1,\dots,2^n\}\}.
        \endaligned$$
        Define the $\vz$ as
        \begin{align*}
            &\vz\colon \left[\vz^{-1}(r_{K - 1,j}^+) , \vz^{-1}(r_{K - 1,j}^+) + \frac{r_{n_1(K,j)}}{2^{K - n_1(K,j) + 1}}\right]\to l(r_{K - 1,j}^+,r_{K,2j - 1}^+)\\
            &\qquad\text{s.t.}\quad \vz\left(\vz^{-1}(r_{K - 1,j}^+) + \frac{r_{n_1(K,j)}}{2^{K - n_1(K,j) + 1}}\right) = r_{K,2j - 1}^+;\\
            &\vz\colon \left[R_{K,j} - \frac{r_K}{2}, R_{K,j} + \frac{r_K}{2}\right]\to l(r_{K,2j - 1}^-,r_{K,2j}^+)\\
            &\qquad\text{s.t.}\quad \vz\left(R_{K,j} - \frac{r_K}{2}\right) = r_{K,2j - 1}^-\quad\text{and}\quad\vz\left(R^K_j + \frac{r_K}{2}\right) = r_{K,2j}^+;\\
            &\vz\colon \left[\vz^{-1}(r_{K - 1,j}^-) - \frac{r_{n_2(K,j)}}{2^{K - n_2(K,j) + 1}}, \vz^{-1}(r_{K - 1,j}^-)\right]\to l(r_{K,2j}^-,r_{K - 1,j}^-)\\
            &\qquad\text{s.t.}\quad \vz\left(\vz^{-1}(r_{K - 1,j}^-) - \frac{r_{n_2(K,j)}}{2^{K - n_2(K,j) + 1}}\right) = r_{K,2j}^-,
        \end{align*} 
        where $\vz$ is homeomorphism on every interval above with constant speed, and $\vz^{-1}$ is defined in Step $(K - 1)-(\lceil j\rceil)$.

    Especially, define $\vz(x) = (x, 0)$ 
    for any $x\in C$. By induction, we obtain a continuous map
    $$\vz\colon [-1,2]\to \pd\Oz\quad\text{with}\quad \vz(-1) = \vz(2),$$
    so that the restriction of $\vz$ to $(-1,2)$ is injective and
    \begin{itemize}
        \item[(1)] $\vz(-\frac{1}{2^k}) = r_{k,1}^+$ for any $k\in\bN^+$;
        \item[(2)] $\vz(1 + \frac{1}{2^k}) = r_{k,2^k}^-$ for any $k\in\bN^+$;
        \item[(3)] for any $k\in\bN^+$, $j\in\{1,2,\dots,2^{k - 1}\}$ and $n\in\bN^+$,
        \begin{align*}
            &\vz\left(R_{k,j} - (1 - \frac{1}{2^n})r_k\right) = r_{k + n - 1, 2^{n - 1}(2j - 1)}^-\\
            &\vz\left(R_{k,j} + (1 - \frac{1}{2^n})r_k\right) = r_{k + n - 1, 2^{n - 1}(2j - 1) + 1}^+.
        \end{align*}
    \end{itemize}
    After a  bi-Lipschitz reparametrization, we may assume that 
    $$\vz\colon \pd \D\to\pd\Oz,$$
    with $\vz(C')=C$ for a Cantor-type set $C'$ of positive length.
    By the construction and the definition of $\Oz$, 
    $\vz$ is continuous and bijective, which shows that $\vz$ is a homeomorphism.
    Then, the domain $\Oz$ is a Jordan domain.

    \subsection{Counterexample, Proof of  Theorem \ref{counterex}\label{ss3}} 

 Our $\vz$ and $\Oz$ show the optimality of Theorem \ref{The1}, i.e., when $q = 1$, the claim of the theorem does not hold. 

\begin{proof}[Proof of  Theorem \ref{counterex}]
    By the definition of $\Oz$ and \eqref{es0.4}, we conclude that
    \begin{align*}
        \int_{\Oz} k_{\Oz}(p_0,z)\,\od z
        &\lesssim \sum^{\fz}_{k = 1} 2^k\int_{S_k}k_{S}(p_0,z)\,\od z\\
        &\lesssim \sum^{\fz}_{k = 1}2^k e^{-k}<\fz,
    \end{align*}
    where $S$ and $S_k$ are defined as in Section \ref{ss2}. As the hyperbolic distance $h_{\Omega}$ is comparable to the quasihyperbolic distance in our Jordan domain $\Omega,$ we have that $h_{\Omega} \in L^1$.

Let $\Phi\colon \overline{\D}\to\overline{\Oz}$  be a homeomorphism such that $\Phi(z) = \vz(z)$ for all $z\in\pd\D$.  Let $\gamma_z$ be any curve connecting an interior point of the unit disk  to a point $z \in C'=\phi^{-1}(C)$; recall that $C' \subset \pd\D$ is of positive length. The points of  $C$ are at the ends of infinite branches, by our definition of $\Omega$. Hence the image curve $\Phi(\gamma_z)$ needs to travel through infinitely many ``snakes" whose cores are formed by the $\gamma_{k,j}$ constructed in Section \ref{ss1}. Now, by \eqref{es0.3}, the length of the core of the branch one needs to travel through is infinite. Thus
$$
\ell(\Phi(\gamma_z)) = \infty.
$$
Especially, by considering suitable radial segments we conclude that $\Phi \not\in W^{1,1}(\D,\C)$.
\end{proof}

\subsection{Proof of Lemma \ref{claim2}}\label{ss4}
\begin{proof}[Proof of Lemma \ref{claim2}]
    For any $(s_0,t_0) \in R := \{(s,t)\in\C:\ |t|\leq g(s),\}$ with $s_0 \geq 0$,
    it is easy to check that $g'(s_0)\leq 0$, $g''(s_0)\geq 0$ and $|g'(s_0)|$ is uniformly bounded. 
    By the proof of \cite[Theorem 3.19]{Staples:89}, we have
    \begin{equation}\label{es0.2}
        d((s_0,t_0),\pd R) \sim g(s_0) - |t_0|. 
    \end{equation}

    Now we prove the claim. 
    For simplicity, assume that $x_1$, $x_2$ and $\az$ are in the position showed in the following figure.

\tikzset{every picture/.style={line width=0.75pt}} 

\begin{tikzpicture}[x=0.75pt,y=0.75pt,yscale=-1,xscale=1]

\draw  [draw opacity=0] (249.98,232.42) .. controls (249.41,232.43) and (248.85,232.44) .. (248.28,232.44) .. controls (210.73,232.44) and (180.28,201.99) .. (180.28,164.44) .. controls (180.28,126.88) and (210.73,96.44) .. (248.28,96.44) .. controls (248.82,96.44) and (249.37,96.44) .. (249.91,96.46) -- (248.28,164.44) -- cycle ; \draw   (249.98,232.42) .. controls (249.41,232.43) and (248.85,232.44) .. (248.28,232.44) .. controls (210.73,232.44) and (180.28,201.99) .. (180.28,164.44) .. controls (180.28,126.88) and (210.73,96.44) .. (248.28,96.44) .. controls (248.82,96.44) and (249.37,96.44) .. (249.91,96.46) ;  
\draw    (248,96.44) -- (452,96.44) ;
\draw    (248,232.44) -- (449,232.44) ;
\draw    (248,77.44) .. controls (308,78.44) and (404,62.44) .. (455,48.44) ;
\draw  [dash pattern={on 0.84pt off 2.51pt}]  (248,77.44) -- (248,232.44) ;
\draw    (171,179.44) .. controls (162,139.44) and (198,79.44) .. (248,77.44) ;
\draw    (248,239.44) .. controls (208,238.44) and (182,220.44) .. (171,179.44) ;
\draw    (248,239.44) .. controls (298,238.44) and (417,237.44) .. (447,237.44) ;
\draw    (250,117.44) .. controls (310,118.44) and (402,122.44) .. (456,146.44) ;
\draw    (193,159.44) .. controls (197,132.44) and (210,118.44) .. (250,117.44) ;
\draw    (248,226.44) .. controls (202,215.44) and (191,191.44) .. (193,159.44) ;
\draw    (248,226.44) .. controls (295,228.44) and (413,226.44) .. (446,227.44) ;

\draw (467,35) node [anchor=north west][inner sep=0.75pt]   [align=left] {$x_1$};
\draw (468,86) node [anchor=north west][inner sep=0.75pt]   [align=left] {$\az$};
\draw (467,144) node [anchor=north west][inner sep=0.75pt]   [align=left] {$x_2$};
\draw (250,80.44) node [anchor=north west][inner sep=0.75pt]   [align=left] {$\az(s_0)$};
\draw (250,210) node [anchor=north west][inner sep=0.75pt]   [align=left] {$\az(s_1)$};
\end{tikzpicture}

Recall that we write $x = (t,r)\in S$ with $(t,r)\in [0,\fz)\times [-g(t),g(t)]$ if
$$[x_1(t),x_2(t)] \perp \az,\quad [x_1(t),x_2(t)] \cap \az = \{\az(t)\},$$
$$|x_1(t) - \az(t)| = |x_2(t) - \az(t)| = g(t),$$
and
$$x\in [x_1(t),x_2(t)],\quad |x_1(t) - x| = g(t) - r.$$
Moreover, in the picture above, $\az$ can be decomposed into two horizontal line segments
and a half circle $\az([s_0,s_1])$.

Let $k\in\bN$ such that $t\in [k - 1,k]$. Then, by \eqref{es0.1},
\begin{equation}\label{es0.6}
    |g(t)| \leq \frac{1}{2^{2k + 6}} = \frac{|\az(s_0) - \az(s_1)|}{32}.
\end{equation}
    \begin{itemize}
        \item[(a)]If $|x_1(t) - x|\geq \frac{3g(t)}{2}$, then 
    $d(x,x_2) \leq d(x,\az) \leq d(x,x_1)$. By \eqref{es0.2}, we have
    $$d(x,\pd S) = d(x,x_2) \gtrsim |x_2(t) - x|\sim \min\{|x_1(t) - x|,|x_2(t) - x|\}.$$
        \item[(b)]If $|x_1(t) - x|\leq \frac{3g(t)}{2}$, 
    then 
    $$\min\{|x_1(t) - x|,|x_2(t) - x|\}\sim |x_1(t) - x|\lesssim d(x,x_2).$$ 
    \end{itemize}
    Thus, it suffices to show 
    \begin{equation}\label{es0.5}
        d(x,x_1) \sim |x_1(t) - x|.
    \end{equation}
    By \eqref{es0.6}, without loss of generality, we only need to consider the situation when $x = (t,0) = \az(t)$.
    
\subsection*{Case I} $x = (t,0)$ is on the horizontal part of $\az$ (see the figure below).
        \tikzset{every picture/.style={line width=0.75pt}} 

        \tikzset{every picture/.style={line width=0.75pt}} 

        \begin{tikzpicture}[x=0.75pt,y=0.75pt,yscale=-1,xscale=1]
        
        \draw  [draw opacity=0] (250.17,228.09) .. controls (249.74,228.1) and (249.31,228.1) .. (248.88,228.1) .. controls (217.24,228.1) and (191.6,204.95) .. (191.6,176.39) .. controls (191.6,147.83) and (217.24,124.68) .. (248.88,124.68) .. controls (249.29,124.68) and (249.7,124.68) .. (250.11,124.69) -- (248.88,176.39) -- cycle ; \draw   (250.17,228.09) .. controls (249.74,228.1) and (249.31,228.1) .. (248.88,228.1) .. controls (217.24,228.1) and (191.6,204.95) .. (191.6,176.39) .. controls (191.6,147.83) and (217.24,124.68) .. (248.88,124.68) .. controls (249.29,124.68) and (249.7,124.68) .. (250.11,124.69) ;  
        \draw    (248.64,124.68) -- (420.47,124.68) ;
        \draw    (248.64,228.1) -- (417.94,228.1) ;
        \draw    (248.64,110.23) .. controls (299.18,110.99) and (380.04,98.82) .. (422.99,88.18) ;
        \draw  [dash pattern={on 0.84pt off 2.51pt}]  (248.64,110.23) -- (248.64,228.1) ;
        \draw    (183.78,187.8) .. controls (176.2,157.38) and (206.52,111.75) .. (248.64,110.23) ;
        \draw    (248.64,233.43) .. controls (214.95,232.67) and (193.05,218.98) .. (183.78,187.8) ;
        \draw    (248.64,233.43) .. controls (290.75,232.67) and (390.99,231.9) .. (416.25,231.9) ;
        \draw    (250.32,140.65) .. controls (300.86,141.41) and (378.35,144.45) .. (423.84,162.7) ;
        \draw    (202.31,172.59) .. controls (205.68,152.06) and (216.63,141.41) .. (250.32,140.65) ;
        \draw    (248.64,223.54) .. controls (209.89,215.17) and (200.63,196.92) .. (202.31,172.59) ;
        \draw    (248.64,223.54) .. controls (288.23,225.06) and (387.62,223.54) .. (415.41,224.3) ;
        \draw    (280.65,92.74) -- (280.65,252.44) ;
        \draw    (188.84,125.44) -- (278.96,124.68) ;
        \draw    (211.58,124.68) -- (305.91,90.46) ;
        \draw    (274.75,120.12) -- (281.49,120.12) ;
        \draw    (274.75,120.12) -- (274.75,125.44) ;
        
        \draw (431.13,75.92) node [anchor=north west][inner sep=0.75pt]   [align=left] {$x_1$};
        \draw (432.52,114.71) node [anchor=north west][inner sep=0.75pt]   [align=left] {$\az$};
        \draw (431.13,158.81) node [anchor=north west][inner sep=0.75pt]   [align=left] {$x_2$};
        \draw (282,101.51) node [anchor=north west][inner sep=0.75pt]   [align=left] {$x_1(t)$};
        \draw (281.46,125) node [anchor=north west][inner sep=0.75pt]   [align=left] {$x$};
        \draw (200,110) node [anchor=north west][inner sep=0.75pt]    {$y_1$};
        \draw (269.12,78.2) node [anchor=north west][inner sep=0.75pt]   [align=left] {$y_2$};
        \draw (230,125) node [anchor=north west][inner sep=0.75pt]   [align=left] {$\az(s_0)$};
        \draw (221,206) node [anchor=north west][inner sep=0.75pt]   [align=left] {$\az(s_1)$};
        \draw (230,95) node [anchor=north west][inner sep=0.75pt]   [align=left] {$x_1(s_0)$};
        
        \end{tikzpicture}

        Here $y_1$ is the intersection of the line $[x,\az(s_0)]$ and the curve $x_1([s_0,s_1])$,
        and $y_2$ is the intersection of two lines $[x,x_1(t)]$ and $[y_1,x_1(s_0)]$.

        By \eqref{es0.6}, we have $\angle x,y_1,y_2 \leq \frac{\pi}{4}$, which leads that 
        $$d(x,x_1([s_0,s_1]))\geq \frac{|y_2 - x|}{\sqrt{2}} \geq \frac{|x_1(t) - x|}{\sqrt{2}}.$$
        Combining with \eqref{es0.2}, we conclude that
        $$d(x,x_1) \gtrsim |x_1(t) - x|.$$
        The proof is similar when $t\geq s_1$.

\subsection*{Case II} $x = (t,0)$ is such that $t\in [s_0,s_1]$.
        \tikzset{every picture/.style={line width=0.75pt}} 

\begin{tikzpicture}[x=0.75pt,y=0.75pt,yscale=-1,xscale=1]

\draw  [draw opacity=0] (250.17,228.09) .. controls (249.74,228.1) and (249.31,228.1) .. (248.88,228.1) .. controls (217.24,228.1) and (191.6,204.95) .. (191.6,176.39) .. controls (191.6,147.83) and (217.24,124.68) .. (248.88,124.68) .. controls (249.29,124.68) and (249.7,124.68) .. (250.11,124.69) -- (248.88,176.39) -- cycle ; \draw   (250.17,228.09) .. controls (249.74,228.1) and (249.31,228.1) .. (248.88,228.1) .. controls (217.24,228.1) and (191.6,204.95) .. (191.6,176.39) .. controls (191.6,147.83) and (217.24,124.68) .. (248.88,124.68) .. controls (249.29,124.68) and (249.7,124.68) .. (250.11,124.69) ;  
\draw    (248.64,124.68) -- (420.47,124.68) ;
\draw    (248.64,228.1) -- (417.94,228.1) ;
\draw    (248.64,110.23) .. controls (299.18,110.99) and (380.04,98.82) .. (422.99,88.18) ;
\draw  [dash pattern={on 0.84pt off 2.51pt}]  (248.64,110.23) -- (248.64,228.1) ;
\draw    (183.78,187.8) .. controls (176.2,157.38) and (206.52,111.75) .. (248.64,110.23) ;
\draw    (248.64,233.43) .. controls (214.95,232.67) and (193.05,218.98) .. (183.78,187.8) ;
\draw    (248.64,233.43) .. controls (290.75,232.67) and (390.99,231.9) .. (416.25,231.9) ;
\draw    (250.32,140.65) .. controls (300.86,141.41) and (378.35,144.45) .. (423.84,162.7) ;
\draw    (202.31,172.59) .. controls (205.68,152.06) and (216.63,141.41) .. (250.32,140.65) ;
\draw    (248.64,223.54) .. controls (209.89,215.17) and (200.63,196.92) .. (202.31,172.59) ;
\draw    (248.64,223.54) .. controls (288.23,225.06) and (387.62,223.54) .. (415.41,224.3) ;
\draw    (322.5,89.44) -- (161.5,156.44) ;
\draw    (216.5,98.44) -- (248.88,176.39) ;
\draw    (195.5,142.44) -- (224.5,115.44) ;
\draw    (220.5,127.44) -- (227.5,124.44) ;
\draw    (220.5,127.44) -- (222.5,132.44) ;
\draw    (322.5,89.44) -- (124.5,144.44) ;

\draw (431.13,75.92) node [anchor=north west][inner sep=0.75pt]   [align=left] {$x_1$};
\draw (432.52,114.71) node [anchor=north west][inner sep=0.75pt]   [align=left] {$\az$};
\draw (431.13,158.81) node [anchor=north west][inner sep=0.75pt]   [align=left] {$x_2$};
\draw (190,105) node [anchor=north west][inner sep=0.75pt]   [align=left] {$x_1(t)$};
\draw (222.5,130.44) node [anchor=north west][inner sep=0.75pt]   [align=left] {$x$};
\draw (180,135) node [anchor=north west][inner sep=0.75pt]    {$y_1$};
\draw (314.12,67.2) node [anchor=north west][inner sep=0.75pt]   [align=left] {$y_2$};
\draw (253.64,121.68) node [anchor=north west][inner sep=0.75pt]   [align=left] {$\az(s_0$)};
\draw (250.64,236.43) node [anchor=north west][inner sep=0.75pt]   [align=left] {$\az(s_1)$};
\draw (232.62,84.51) node [anchor=north west][inner sep=0.75pt]   [align=left] {$x_1(s_0)$};

\end{tikzpicture}

Here $[y_1,y_2]$ is on the tangent of $\az([s_0,s_1])$ passing through $x$,
$y_1\in x_1([s_0,s_1])$, and $y_2$ is the intersection of the tangent and the line $[x_1(t),x_1(s_0)]$.

By \eqref{es0.6}, it is clear that $\angle x,y_1,x_1(t) \leq \frac{\pi}{4}$ and $\angle x,y_2,x_1(t) \leq \frac{\pi}{4}$, thus
\begin{equation}\label{es0.7}
    d(x,x_1([s_0,s_1])) \gtrsim |x_1(t) - s|.
\end{equation}
On the other hand,
\tikzset{every picture/.style={line width=0.75pt}} 

\begin{tikzpicture}[x=0.75pt,y=0.75pt,yscale=-1,xscale=1]

\draw  [draw opacity=0] (250.17,228.09) .. controls (249.74,228.1) and (249.31,228.1) .. (248.88,228.1) .. controls (217.24,228.1) and (191.6,204.95) .. (191.6,176.39) .. controls (191.6,147.83) and (217.24,124.68) .. (248.88,124.68) .. controls (249.29,124.68) and (249.7,124.68) .. (250.11,124.69) -- (248.88,176.39) -- cycle ; \draw   (250.17,228.09) .. controls (249.74,228.1) and (249.31,228.1) .. (248.88,228.1) .. controls (217.24,228.1) and (191.6,204.95) .. (191.6,176.39) .. controls (191.6,147.83) and (217.24,124.68) .. (248.88,124.68) .. controls (249.29,124.68) and (249.7,124.68) .. (250.11,124.69) ;  
\draw    (248.64,124.68) -- (420.47,124.68) ;
\draw    (248.64,228.1) -- (417.94,228.1) ;
\draw    (248.64,110.23) .. controls (299.18,110.99) and (380.04,98.82) .. (422.99,88.18) ;
\draw  [dash pattern={on 0.84pt off 2.51pt}]  (248.64,110.23) -- (248.64,228.1) ;
\draw    (183.78,187.8) .. controls (176.2,157.38) and (206.52,111.75) .. (248.64,110.23) ;
\draw    (248.64,233.43) .. controls (214.95,232.67) and (193.05,218.98) .. (183.78,187.8) ;
\draw    (248.64,233.43) .. controls (290.75,232.67) and (390.99,231.9) .. (416.25,231.9) ;
\draw    (250.32,140.65) .. controls (300.86,141.41) and (378.35,144.45) .. (423.84,162.7) ;
\draw    (202.31,172.59) .. controls (205.68,152.06) and (216.63,141.41) .. (250.32,140.65) ;
\draw    (248.64,223.54) .. controls (209.89,215.17) and (200.63,196.92) .. (202.31,172.59) ;
\draw    (248.64,223.54) .. controls (288.23,225.06) and (387.62,223.54) .. (415.41,224.3) ;
\draw    (221.5,73.44) -- (222.5,179.44) ;
\draw  [dash pattern={on 0.84pt off 2.51pt}]  (98.5,125.44) -- (323.5,124.44) ;
\draw  [dash pattern={on 0.84pt off 2.51pt}]  (106.5,117.44) .. controls (162.5,116.44) and (191.5,117.44) .. (248.64,110.23) ;

\draw (431.13,75.92) node [anchor=north west][inner sep=0.75pt]   [align=left] {$x_1$};
\draw (432.52,114.71) node [anchor=north west][inner sep=0.75pt]   [align=left] {$\az$};
\draw (431.13,158.81) node [anchor=north west][inner sep=0.75pt]   [align=left] {$x_2$};
\draw (222,133.44) node [anchor=north west][inner sep=0.75pt]   [align=left] {$x$};
\draw (250.64,236.43) node [anchor=north west][inner sep=0.75pt]   [align=left] {$\az(s_1)$};
\draw (76.13,98.92) node [anchor=north west][inner sep=0.75pt]   [align=left] {$\wt{x}_1$};
\draw (205,95) node [anchor=north west][inner sep=0.75pt]   [align=left] {$\wt{y}_1$};
\draw (76.52,123.71) node [anchor=north west][inner sep=0.75pt]   [align=left] {$\wt{\az}$};
\draw (222,115) node [anchor=north west][inner sep=0.75pt]   [align=left] {$\wt{y}_2$};
\draw (253.64,121.68) node [anchor=north west][inner sep=0.75pt]   [align=left] {$\az(s_0$)};

\end{tikzpicture}

Let $\wt{\az}$ be the horizontal line through $\az(s_0)$, and $\wt{x}_1$ is the curve constructed as $x_1$ but with the line $\wt{\az}$.
$[x,\wt{y}_1]$ is perpendicular to $\wt{\az}$ and intersects with $\wt{x}_1$ at point $\wt{y}_1$, $[x,\wt{y}_1]\cap \az([s_0,s_1]) = \wt{y}_2$.
Then, by \eqref{es0.2} and \eqref{es0.7}, we have
\begin{align*}
    d(x,x_1([0,s_0])) \geq d(x,\wt{x}_1) \gtrsim |x - \wt{y}_1| \gtrsim |x - \wt{y}_2| \gtrsim |x_1(t) - x|.
\end{align*}
Combined with \eqref{es0.7}, the claim holds true in this situation.

If $x$ is close to $\az(s_1)$, the proof is just the same.

If the tagent through $x$ has two intersection points with $\az([s_0,s_1])$, the proof is similar and we omit the details here.

We have showed \eqref{es0.5}, and thus also our claim.
\end{proof}

\end{document}